\documentclass{elsarticle}

\usepackage[all,cmtip]{xy}
\usepackage{array}
\usepackage{amsmath, amssymb, amsthm}
\usepackage{graphics}
\usepackage{caption,subcaption}
\usepackage{algorithm, algorithmic}
\usepackage{booktabs}
\usepackage{color}

\def\rmd{{\rm d}}
\def\deg{{\rm deg}}

\newtheorem{theorem}{{\bf Theorem}}

\newtheorem{lemma}[theorem]{{\bf Lemma}}
\newtheorem{example}{{\bf Example}}

\def\bfa{{\boldsymbol{a}}}
\def\bfb{{\boldsymbol{b}}}

\def\bfp{{\boldsymbol{p}}}
\def\bfx{{\boldsymbol{x}}}

\def\mB{\boldsymbol{B}}
\def\mC{\boldsymbol{C}}
\def\mI{\boldsymbol{I}}
\def\mQ{\boldsymbol{Q}}
\def\mM{\boldsymbol{M}}

\def\tbfx{\tilde{\bfx}}
\def\ta{\tilde{a}}
\def\tb{\tilde{b}}

\def\RR{\mathbb{R}}

\def\AAA{\mathcal{A}}
\def\CCC{\mathcal{C}}

\def\GGG{\mathcal{G}}
\def\LLL{\mathcal{L}}
\def\OOO{\mathcal{O}}
\def\tOOO{\tilde{\mathcal{O}}}

\def\Res{\mathrm{Res}}

\begin{document}

\begin{frontmatter}
\title{Symmetry Detection of Rational Space Curves\\ from their Curvature and Torsion}


\author[a]{Juan Gerardo Alc\'azar\fnref{proy}}
\ead{juange.alcazar@uah.es}
\author[a]{Carlos Hermoso}
\ead{carlos.hermoso@uah.es}
\author[b]{Georg Muntingh}
\ead{georgmu@math.uio.no}

\address[a]{Departamento de F\'{\i}sica y Matem\'aticas, Universidad de Alcal\'a,
E-28871 Madrid, Spain}
\address[b]{SINTEF ICT, PO Box 124 Blindern, 0314 Oslo, Norway}

\fntext[proy]{Supported by the Spanish ``Ministerio de
Ciencia e Innovacion" under the Project MTM2011-25816-C02-01. Partially supported by
the Jos\'e Castillejos' grant $\mbox{CAS12$/$00022}$ from the Spanish Ministerio de Educaci\'on, Cultura y Deporte.
Member of the Research Group {\sc asynacs} (Ref. {\sc ccee2011/r34}) }


\begin{abstract}
We present a novel, deterministic, and efficient method to detect whether a given rational space curve is symmetric. By using well-known differential invariants of space curves, namely the curvature and torsion, the method is significantly faster, simpler, and more general than an earlier method addressing a similar problem~\cite{AHM13-2}. To support this claim, we present an analysis of the arithmetic complexity of the algorithm and timings from an implementation in {\tt Sage}.
\end{abstract}

\end{frontmatter}

\section{Introduction}\label{section-introduction}
\noindent The problem of detecting the symmetries of curves and surfaces has attracted the attention of many researchers throughout the years, because of the interest from fields like Pattern Recognition \cite{Boutin, Calabi, Huang, LR08, LRTh, Suk1, Suk2, TC00, Taubin2, Weiss}, Computer Graphics \cite{Berner08, Bokeloh, Lipman, Martinet,  Mitra06, Podolak, Schnabel, Simari}, and Computer Vision \cite{Alt88, Brass, Jiang, Li08, Li10, Loy, Tate, Sun}. The introduction in \cite{AHM13-2} contains an extensive account of the variety of approaches used in the above references.

A common characteristic in most of these papers is that the methods focus on computing \emph{approximate} symmetries more than exact symmetries, which is perfectly reasonable in many applications, where curves and surfaces often serve as merely approximate representations of a more complex shape. Some exceptions appear here: If the object to be considered is discrete (e.g. a polyhedron), or is described by a discrete object, like for instance a control polygon or a control polyhedron, then the symmetries can be determined exactly \cite{Alt88, Brass, Jiang, Li08}. Examples of the second class are B\'ezier curves and tensor product surfaces. Furthermore, in these cases the symmetries of the curve or surface follow from those of the underlying discrete object. Another exception appears in \cite{LR08}, where the authors provide a deterministic method to detect rotation symmetry of an implicitly defined algebraic plane curve and to find the exact rotation angle and rotation center. The method uses a complex representation of the curve and is generalized in \cite{LRTh} to detect mirror symmetry as well.

Rational curves are frequently used in Computer Aided Geometric Design and are the building blocks of NURBS curves. Compared to implicit curves, rational parametric curves are easier to manipulate and visualize. Space curves appear in a natural way when intersecting two surfaces, and they play an important role when dealing with special types of surfaces, often used in geometric modeling, like ruled surfaces, canal surfaces or surfaces of revolution, which are generated from a {\it directrix} or {\it profile} curve. Furthermore, in geometric modeling it is typical to use rational space curves as profile curves.

In this paper we address the problem of deterministically finding the symmetries of a rational space curve, defined by means of a proper parametrization. Notice that since we deal with a global object, i.e., the set of all points in the image of a rational parametrization, and not just a piece of it, the discrete approach from \cite{Alt88, Brass, Jiang, Li08} is not suitable here. Determining if a rational space curve is symmetric or not is useful in order to properly describe the topology of the curve \cite{AD10}. Furthermore, if the space curve is to be used for generating, for instance, a canal surface or a surface of revolution, certain symmetries of the curve will be inherited by the generated surface. Hence, for modeling purposes it can be interesting to know these symmetries in advance.

Recently, the problem of determining whether a rational plane or space curve is symmetric has been addressed in \cite{A13, AHM13, AHM13-2} using a different approach. The common denominator in these papers is the following observation: If a rational curve is \emph{symmetric}, i.e., invariant under a nontrivial isometry $f$, then this symmetry induces another parametrization of the curve, different from the original parametrization. Assuming that the initial parametrization is proper (definition below), the second parametrization is also proper. Since two proper parametrizations of the same curve are related by a M\"obius transformation \cite{SWPD}, determining the symmetries is reduced to finding this transformation, therefore translating the problem to the parameter space. This observation leads to algorithms for determining the symmetries of plane curves with polynomial parametrizations \cite{A13} and of plane and space curves with rational parametrizations \cite{AHM13-2}, although in the latter case of general space curves only involutions were considered. The more general problem of determining whether two rational plane curves are similar was considered in \cite{AHM13}.

In this paper we again employ the above observation, but in addition we now also use well-known differential invariants of space curves, namely the curvature and the torsion. The improvement over the method in \cite{AHM13-2} is threefold: First of all, we are now able to find \emph{all} the symmetries of the curve instead of just the involutions. Secondly, the new algorithm is considerably faster and can efficiently handle even curves with high degrees and large coefficients in reasonable timings. Finally, the method is simpler to implement and requires fewer assumptions on the parametrization.

Some general facts on symmetries of rational curves are presented in Section~\ref{sec-prelim}. Section~\ref{sec-sym-detec} provides an algorithm for checking whether a curve is symmetric. The determination of the symmetries themselves is addressed in Section~\ref{sec-find}. Finally, in Section~\ref{exp-sec} we report on the performance of the algorithm, by presenting a complexity analysis and providing timings for several examples, including a comparison with the curves tested in \cite{AHM13-2}.

\section{Symmetries of rational curves} \label{sec-prelim}
\noindent Throughout the paper, we consider a rational space curve $\CCC\subset \RR^3$, neither a line nor a circle, parametrized by a rational map
\begin{equation}\label{eq:parametrizations}
{\bfx}: \RR \dashrightarrow \CCC\subset \RR^3, \qquad \bfx(t) = \big( x(t),y(t),z(t) \big) .
\end{equation}
The components $x(t), y(t), z(t)$ of $\bfx$ are rational functions of $t$ with rational coefficients, and they are defined for all but a finite number of values of $t$. Let the \emph{(parametric)} degree $m$ of $\bfx$ be the maximal degree of the numerators and denominators of the components $x(t),y(t),z(t)$. Note that rational curves are irreducible. We assume that the parametrization \eqref{eq:parametrizations} is \emph{proper}, i.e., birational or, equivalently, injective except for perhaps finitely many values of $t$. This can be assumed without loss of generality, since any rational curve can quickly be properly reparametrized. For these claims and other results on properness, the interested reader can consult \cite{SWPD} for plane curves and \cite[\S 3.1]{A12} for space curves. 

We recall some facts from Euclidean geometry \cite{Coxeter}. An \emph{isometry} of $\RR^3$ is a map $f:\RR^3\longrightarrow \RR^3$ preserving Euclidean distances. Any isometry $f$ of $\RR^3$ is linear affine, taking the form
\begin{equation}\label{eq:isometry}
f(\bfx) = \mQ\bfx + \bfb, \qquad \bfx\in \RR^3,
\end{equation}
with $\bfb \in \RR^3$ and $\mQ\in \RR^{3\times 3}$ an orthogonal matrix. In particular $\det(\mQ) = \pm 1$. Under composition, the isometries of $\RR^3$ form the \emph{Euclidean group}, which is generated by \emph{reflections}, i.e., symmetries with respect to a plane, or \emph{mirror symmetries}. An isometry is called \emph{direct} when it preserves the orientation, and \emph{opposite} when it does not. In the former case $\det(\mQ) = 1$, while in the latter case $\det(\mQ) = -1$. The identity map of $\RR^3$ is called the \emph{trivial symmetry}.

The classification of the nontrivial isometries of Euclidean space includes reflections (in a plane), rotations (about an axis), and translations, and these combine in commutative pairs to form twists, glide reflections, and rotatory reflections. Composing three reflections in mutually perpendicular planes through a point $\bfp$, yields a \emph{central inversion} (also called \emph{central symmetry}), with center $\bfp$, i.e., a symmetry with respect to the point $\bfp$. The particular case of rotation by an angle $\pi$ is of special interest, and it is called a \emph{half-turn}. Rotation symmetries are direct, while mirror and central symmetries are opposite.

\begin{lemma}
A rational space curve $\CCC\subset \RR^3$ different from a line cannot be invariant under a translation, glide reflection, or twist.
\end{lemma}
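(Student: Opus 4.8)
The plan is to reduce all three cases to a single elementary fact about translations. Suppose first that $\CCC$ is invariant under a translation $\bfx\mapsto\bfx+\bfb$ with $\bfb\neq\boldsymbol{0}$. Then for any $p\in\CCC$ the whole orbit $\{p+n\bfb:n\in\ZZ\}$ lies both on $\CCC$ and on the line $\ell_p=\{p+s\bfb:s\in\RR\}$. Since $\CCC$ is an algebraic curve, it meets $\ell_p$ either in finitely many points or in all of $\ell_p$; as the orbit is infinite, we must have $\ell_p\subseteq\CCC$. Because $\CCC$ is irreducible (rational curves are irreducible) and one–dimensional, a line contained in it must be a whole component, forcing $\CCC=\ell_p$. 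This contradicts the hypothesis that $\CCC$ is not a line, so no nontrivial translation can preserve $\CCC$.

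Next I would dispose of the glide reflection and of part of the twist case by iterating. A glide reflection $f(\bfx)=\mQ\bfx+\bfb$ has $f^2(\bfx)=\mQ^2\bfx+(\mQ+\mI)\bfb=\bfx+2\bfb_{\parallel}$, where $\bfb_{\parallel}\neq\boldsymbol{0}$ is the component of $\bfb$ parallel to the mirror plane; thus $f^2$ is a nontrivial translation, and invariance under $f$ forces invariance under $f^2$, so the first paragraph applies. Likewise a twist is a rotation by some angle $\theta$ about an axis composed with a nonzero translation $\bfc$ along that axis; if $\theta$ is a rational multiple of $2\pi$, say $\theta=2\pi p/q$, then $f^q$ is the identity rotation composed with translation by $q\bfc\neq\boldsymbol{0}$, again a nontrivial translation, and we are done.

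The remaining, genuinely delicate case is the twist whose angle $\theta$ is an irrational multiple of $2\pi$, where no power of $f$ is a translation. Here I would exploit the rotational structure. Since rotation about the axis and translation along it both preserve the distance to the axis, the orbit $\{f^n(p)\}$ of a point $p\in\CCC$ off the axis lies on a cylinder $\mathcal{Z}$ about the axis; being infinite it is Zariski dense in the irreducible curve $\CCC$, so $\CCC\subseteq\mathcal{Z}$. Projecting along the axis onto the base circle $S$ gives a generically finite morphism $\pi\colon\CCC\to S$ (the image cannot be a point, else $\CCC$ is a vertical line) that intertwines $f$ with the rotation $R_\theta$ on $S$. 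Taking the trace of the height coordinate $z$ along the fibres of $\pi$ produces a rational function $\sigma$ on $S$ with $\sigma\circ R_\theta=\sigma+\delta c$, where $\delta=\deg\pi\geq 1$ and $c$ is the axial shift. Writing $S$ in its exponential coordinate $\zeta=\e^{\mathrm{i}\phi}$, so that $R_\theta$ becomes $\zeta\mapsto\e^{\mathrm{i}\theta}\zeta$, invariance of the pole set of $\sigma$ under an irrational rotation forbids finite poles off $0$ and $\infty$ and forces $\sigma$ to be a Laurent polynomial in $\zeta$; comparing the constant ($\zeta^0$) coefficients on the two sides of $\sigma(\e^{\mathrm{i}\theta}\zeta)=\sigma(\zeta)+\delta c$ then yields $\delta c=0$, a contradiction.

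The main obstacle is precisely this irrational–angle twist: the clean orbit–on–a–line mechanism breaks down because no iterate is a translation, and one must instead use that $\CCC$ is algebraic, to confine it to a cylinder and pass to a finite cover of the circle, together with a Fourier-type coefficient comparison. A tempting uniform alternative is to observe that each of the three isometries translates a fixed direction $\hat n$ by a nonzero amount, $\langle f(\bfx),\hat n\rangle=\langle\bfx,\hat n\rangle+c$, which combined with the Möbius reparametrization $\varphi$ coming from properness gives the single functional equation $\langle\bfx(\varphi(t)),\hat n\rangle=\langle\bfx(t),\hat n\rangle+c$. This is attractive but not by itself conclusive, since it is also satisfied by lines; the hypotheses that $\CCC$ is irreducible and not a line must re-enter, which is exactly what the cylinder argument supplies.
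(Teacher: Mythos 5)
Your proof is correct, and for the translation and glide-reflection cases it coincides with the paper's argument (an infinite orbit on a line forces the line inside the irreducible curve $\CCC$, and the square of a glide reflection is a nontrivial translation). The difference lies in the twist case. The paper projects $\CCC$ orthogonally onto a plane $\Pi$ perpendicular to the twist axis, observes that the projection is a plane algebraic curve invariant under rotation by the twist angle $\alpha$, and invokes Lemma 1 of \cite{AHM13-2} to conclude that $\alpha = 2\pi/k$ with $k \leq \deg$ of the projected curve; then $f^k$ is a nontrivial translation and the first case applies. You instead split on whether $\alpha$ is a rational or an irrational multiple of $2\pi$: the rational case reduces to a translation directly (no degree bound needed), while the irrational case is handled by a self-contained argument confining $\CCC$ to a cylinder, pushing the height function down to the base circle via the trace along the fibres of the generically finite projection, and comparing constant Laurent coefficients in the functional equation $\sigma \circ R_\theta = \sigma + \delta c$. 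What the paper's route buys is brevity, by outsourcing precisely the irrational-angle difficulty to the earlier plane-curve lemma (which encodes the fact that an algebraic curve admits no rotation of infinite order); what your route buys is self-containedness, at the price of heavier machinery (complexification of the circle as $\CC^*$, the function-field trace, Laurent expansion). Both are valid; the only small points you should make explicit are that a point $p\in\CCC$ off the axis exists because $\CCC$ is not a line, and that the trace identity, established on real points, extends to the complexified circle by Zariski density, which is where the coordinate $\zeta$ and the coefficient comparison live.
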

\begin{proof}
If $\CCC$ were invariant under translation by a vector $\bfb$, then, for any point $\bfx$ on $\CCC$, the line $\LLL = \{\bfx + t\bfb\,:\,t\in \RR\}$ would intersect $\CCC$ in infinitely many points, implying that $\LLL\subset \CCC$ and contradicting that $\CCC$ is an irreducible curve different from a line. Since applying a glide reflection twice yields a translation, $\CCC$ cannot be invariant under a glide reflection either.
Suppose $\CCC$ is invariant under a twist $f$ with axis $\AAA$ and angle $\alpha$, and let $\pi: \RR^3 \longrightarrow \Pi$ be the orthogonal projection onto a plane $\Pi\perp \AAA$. Then the projection $\CCC' := \pi(\CCC)$ is a plane algebraic curve invariant under the rotation $\pi \circ f$ by the angle $\alpha$ about the point $\AAA\cap \Pi$. By Lemma 1 in \cite{AHM13-2}, $\alpha=2\pi/k$ with $k\leq \deg(\CCC')$. But then $\CCC$ is invariant under the translation $f^k$, which is a contradiction.
\end{proof}

Therefore, the rotations, reflections, and their combinations (like central inversions) are the only isometries leaving an irreducible algebraic space curve, different from a line, invariant. We say that an irreducible algebraic space curve is \emph{symmetric}, if it is invariant under one of these (nontrivial) isometries. In that case, we distinguish between a \emph{mirror symmetry}, \emph{rotation symmetry} and \emph{central symmetry}. If the curve is neither a line nor a circle, it has a finite number of symmetries~\cite{AHM13-2}.

We recall the following result from \cite{AHM13-2}. For this purpose, let us recall first that a \emph{M\"obius transformation} (of the affine real line) is a rational function
\begin{equation}\label{eq:Moebius}
\varphi: \RR\dashrightarrow \RR,\qquad \varphi(t) = \frac{a t + b}{c t + d},\qquad \Delta := ad - bc \neq 0.
\end{equation}
In particular, we refer to $\varphi(t) = t$ as the \emph{trivial transformation}. It is well known that the birational functions on the real line are the M\"obius transformations~\cite{SWPD}.

\begin{theorem}\label{th-fund}
Let $\bfx: \RR\dashrightarrow \CCC\subset \RR^3$ be a proper parametric curve as in \eqref{eq:parametrizations}. The curve $\CCC$ is symmetric if and only if there exists a nontrivial isometry $f$ and nontrivial M\"obius transformation $\varphi$ for which we have a commutative diagram
\begin{equation}\label{eq:fundamentaldiagram}
\xymatrix{
\CCC \ar[r]^{f} & \CCC \\
\RR \ar@{-->}[u]^{\bfx} \ar@{-->}[r]_{\varphi} & \RR \ar@{-->}[u]_{\bfx}
}
\end{equation}
Moreover, for each isometry $f$ there exists a unique M\"obius transformation $\varphi$ that makes this diagram commute.
\end{theorem}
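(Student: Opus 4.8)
The plan is to prove the two implications separately and then establish uniqueness, using throughout that an isometry is not merely a homeomorphism but an \emph{affine birational} self-map of $\RR^3$. For the ``if'' direction, suppose the diagram commutes for some nontrivial isometry $f$ and nontrivial $\varphi$. Then $f(\bfx(t)) = \bfx(\varphi(t))$ lies on $\CCC$ for all but finitely many $t$, so $f$ sends a dense subset of $\CCC$ into $\CCC$; since $f$ maps the irreducible algebraic curve $\CCC$ to an algebraic curve of the same degree that shares a dense subset with $\CCC$, I would conclude $f(\CCC) = \CCC$. Hence $\CCC$ is invariant under the nontrivial isometry $f$ and is therefore symmetric. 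This direction is essentially a density/closedness argument and should be routine.

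For the ``only if'' direction, suppose $\CCC$ is symmetric, so $f(\CCC) = \CCC$ for some nontrivial isometry $f$. Because $f(\bfx) = \mQ\bfx + \bfb$ with $\mQ$ orthogonal, $f$ is a birational automorphism of $\RR^3$ whose inverse is again affine; composing it with the proper (hence birational) parametrization $\bfx$ shows that $f\circ\bfx$ is once more a \emph{proper} parametrization, now of $f(\CCC) = \CCC$. The key input is then the cited fact from \cite{SWPD} that any two proper parametrizations of the same rational curve differ by a M\"obius transformation: applied to $\bfx$ and $f\circ\bfx$, it yields a M\"obius transformation $\varphi$ with $f\circ\bfx = \bfx\circ\varphi$, which is exactly the commutativity of \eqref{eq:fundamentaldiagram}.

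It then remains to check that $\varphi$ is nontrivial and to prove uniqueness. If $\varphi$ were the identity, then $f\circ\bfx = \bfx$, so $f$ would fix every point of $\CCC$; since $\CCC$ is not a line it spans $\RR^3$ affinely, and an isometry fixing an affinely spanning set must be the identity, contradicting nontriviality of $f$. For uniqueness, if $\varphi_1$ and $\varphi_2$ both make the diagram commute then $\bfx\circ\varphi_1 = f\circ\bfx = \bfx\circ\varphi_2$, and properness (injectivity of $\bfx$ off a finite set) forces $\varphi_1 = \varphi_2$ on an infinite set; two M\"obius transformations agreeing at infinitely many points are equal. I expect the main obstacle to be the nontriviality argument in the degenerate situation where $\CCC$ is planar: there the reflection in the plane of $\CCC$ fixes the curve pointwise and induces a trivial $\varphi$, so the affine-span argument must be handled with care (or this case treated separately), whereas for a genuinely spatial curve it goes through directly.
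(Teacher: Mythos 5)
The paper never proves Theorem~\ref{th-fund}: it is recalled verbatim from \cite{AHM13-2}, and only its underlying idea is sketched in Section~\ref{section-introduction}. Your argument reconstructs precisely that intended proof. The ``if'' direction (the image $f(\CCC)$ is an irreducible algebraic curve meeting $\CCC$ in infinitely many points, hence equal to $\CCC$), the ``only if'' direction (since $f$ is an invertible affine map, $f\circ\bfx$ is again a proper parametrization of $\CCC$, and by the fact from \cite{SWPD} two proper parametrizations of the same rational curve differ by a M\"obius transformation), and the uniqueness argument (properness forces $\varphi_1=\varphi_2$ on an infinite set, and two M\"obius transformations agreeing at infinitely many points coincide) are all correct and are exactly the ingredients the paper relies on.

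The one gap is the one you flagged yourself, and it is not a technicality that ``handling with care'' will remove: with this paper's definitions the nontriviality claim genuinely fails for planar curves. Suppose $\CCC$ lies in a plane $\Pi$ and has no nontrivial symmetry \emph{within} $\Pi$ (a generic planar cubic, say). Then $\CCC$ is symmetric in the sense of Section~\ref{sec-prelim}, because the reflection in $\Pi$ is a nontrivial isometry leaving $\CCC$ invariant --- indeed fixing it pointwise. But any isometry of $\RR^3$ leaving $\CCC$ invariant must preserve its affine hull $\Pi$, and its restriction to $\Pi$ is a planar symmetry of $\CCC$, hence the identity; so such an isometry is either the identity or the reflection in $\Pi$, and both satisfy $f\circ\bfx=\bfx$, i.e.\ both induce the \emph{trivial} $\varphi$. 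Thus no pair (nontrivial $f$, nontrivial $\varphi$) exists, and the ``only if'' implication is false for such curves. Your affine-span argument is the correct nontriviality proof exactly when $\CCC$ is not contained in a plane; to prove the theorem as literally stated one must either exclude planar curves or adjust the definition of symmetry. This is consistent with the rest of the paper, whose machinery degenerates for planar curves ($\tau\equiv 0$, see Section~\ref{obs-planar}), and with the fact that the statement is established in \cite{AHM13-2} rather than here. So your proposal is incomplete at the nontriviality step, but the incompleteness reflects an edge case in the statement itself rather than a missing idea in your argument.
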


Note that $\varphi(t)$ is the parameter value corresponding to the image under the symmetry $f$ of the point on $\CCC$ with parameter $t$.

\begin{lemma}
Let $\varphi$ be a M\"obius transformation associated to a parametrization~$\bfx$ and isometry $f$ in the sense of Theorem \ref{th-fund}. Then its coefficients $a,b,c,d$ can be assumed to be real, by dividing by a common complex number if necessary.
\end{lemma}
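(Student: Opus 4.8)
The plan is to exploit the fact that everything on the right-hand side of the commutative diagram \eqref{eq:fundamentaldiagram} is real. Writing the isometry as $f(\bfx) = \mQ\bfx + \bfb$ as in \eqref{eq:isometry}, the commutativity of \eqref{eq:fundamentaldiagram} is the functional identity
\[
\bfx(\varphi(t)) = \mQ\bfx(t) + \bfb,
\]
valid for all but finitely many real $t$. Here the components of $\bfx$ are rational functions with real (in fact rational) coefficients, and $\mQ$ and $\bfb$ are real, so the entire right-hand side is real.

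First I would take complex conjugates. Let $\bar\varphi$ denote the M\"obius transformation whose coefficients are $\bar a,\bar b,\bar c,\bar d$. Since a rational function with real coefficients satisfies $\overline{\bfx(s)} = \bfx(\bar s)$ for every $s\in\CC$, conjugating the identity above at a real value $t$ gives $\bfx(\bar\varphi(t)) = \mQ\bfx(t) + \bfb = \bfx(\varphi(t))$. Hence $\bfx\circ\bar\varphi$ and $\bfx\circ\varphi$ agree at infinitely many real values and therefore coincide as rational maps.

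Next I would invoke properness. Because $\bfx$ is birational, it is injective off a finite set, so $\bfx\circ\bar\varphi = \bfx\circ\varphi$ forces $\bar\varphi = \varphi$ as transformations of the line. Two M\"obius transformations are equal exactly when their coefficient vectors are proportional, so there is some $\lambda\in\CC^\ast$ with $(\bar a,\bar b,\bar c,\bar d) = \lambda(a,b,c,d)$. Conjugating this relation once more yields $(a,b,c,d) = \bar\lambda\,(\bar a,\bar b,\bar c,\bar d) = |\lambda|^2(a,b,c,d)$, and since $(a,b,c,d)\neq \boldsymbol{0}$ we get $|\lambda|=1$. Finally I would rescale: dividing all four coefficients by a square root $\nu$ of $\bar\lambda$ leaves $\varphi$ unchanged as a map, and a short computation using $\lambda = \overline{\nu^2}$ shows that the rescaled coefficients equal their own conjugates, i.e.\ are real. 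The only nonroutine point is the passage from the conjugated functional identity to $\bar\varphi = \varphi$, where the properness (injectivity) of $\bfx$ is essential; the remaining steps are elementary manipulations of the coefficient vector.
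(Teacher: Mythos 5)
Your proof is correct, but it follows a genuinely different route from the paper's. Both arguments hinge on the same core fact, namely that $\varphi$ commutes with complex conjugation, equivalently maps the real line to itself, but they establish and exploit it differently. The paper observes directly that $\varphi = \bfx^{-1}\circ f \circ \bfx$ preserves the real line, then expands $\varphi(t) - \overline{\varphi(t)} = 0$ as an explicit rational function of $t$ to deduce that $a\overline{c}$ and $b\overline{d}$ are real, runs the analogous computation for $\varphi^{-1}$ to get that $d\overline{c}$ and $b\overline{a}$ are real, concludes that all four coefficients share a common argument $\theta$ (up to sign), and divides by $\e^{i\theta}$. You instead conjugate the functional equation $\bfx\circ\varphi = f\circ\bfx$ itself, using the realness of $\bfx$, $\mQ$, $\bfb$ together with properness to conclude $\overline{\varphi} = \varphi$ as maps; then proportionality of the coefficient vectors of equal M\"obius transformations gives $(\overline{a},\overline{b},\overline{c},\overline{d}) = \lambda(a,b,c,d)$, the relation $\lambda\overline{\lambda} = 1$ forces $|\lambda| = 1$, and rescaling by a square root $\nu$ of $\overline{\lambda}$ makes every coefficient fixed by conjugation, hence real (your key identity $\lambda\nu = \overline{\nu}$ does check out, since $|\nu| = 1$). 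What your version buys is a cleaner, structural descent argument that avoids the coefficient bookkeeping and the separate computation with $\varphi^{-1}$; what the paper's version buys is a completely elementary, self-contained calculation. One point you should phrase more carefully: since $\varphi(t)$ is not yet known to be real, the injectivity you invoke must hold at all but finitely many \emph{complex} parameter values, which is what birationality provides; the equivalence of properness with injectivity off a finite set, stated in Section~\ref{sec-prelim} for real parameters, should be read over $\CC$ here --- or you can sidestep the issue entirely by composing both sides of $\bfx\circ\overline{\varphi} = \bfx\circ\varphi$ with the rational inverse $\bfx^{-1}$.
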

\begin{proof}
For any proper parametrization $\bfx$ and isometry $f$ the associated M\"obius transformation $\varphi = \bfx^{-1}\circ f \circ \bfx$ maps the real line to itself. In particular, since
\[
0 = \varphi(t) - \overline{\varphi(t)}
  = \frac{(a\overline{c} - \overline{a}c)t^2 + (b\overline{c} - \overline{b}c + a\overline{d} - \overline{a}d)t + (b\overline{d} - \overline{b}d)}{(ct + d)(\overline{c}t + \overline{d})}
\]
for any $t$ for which $\varphi(t)$ is defined, $a\overline{c}$ and $b\overline{d}$ are real, so that $\arg(a) = \arg(c)$ and $\arg(b) = \arg(d)$. A similar argument for $\varphi^{-1}$ yields that
$-d\overline{c}/|ad - bc|^2$ and $-b\overline{a}/|ad - bc|^2$ are real, implying that $\arg(c) = \arg(d)$ and $\arg(a) = \arg(b)$. It follows that all coefficients of $\varphi$ have a common argument $\theta$. Therefore, after dividing the coefficients of $\varphi$ by $\exp(i\theta)$, the coefficients of $\varphi$ can be assumed to be real.
\end{proof}

Let the \emph{curvature} $\kappa$ and \emph{torsion} $\tau$ of a parametric curve $\bfx$ be the functions
\[\kappa = \kappa_\bfx := \frac{\Vert \bfx' \times \bfx''\Vert}{\Vert \bfx' \Vert^3}, \qquad
    \tau =   \tau_\bfx := \frac{\langle \bfx'\times \bfx'', \bfx'''\rangle}{\Vert \bfx' \times \bfx''\Vert^2}\]
of the parameter $t$. Note that $\kappa$ is non-negative. The functions $\kappa^2$ and $\tau^2$ are well-known rational \emph{differential invariants} of the parametrization $\bfx$, in the sense that
\begin{equation}\label{eq:kappatauinv}
\kappa_{f\circ \bfx} = \kappa_\bfx, \qquad \tau_{f\circ \bfx} = \det(\mQ)\cdot \tau_\bfx
\end{equation}
for any isometry $f(\bfx) = \mQ\bfx + \bfb$. This follows immediately from $\mQ$ being orthogonal and the identity
\begin{equation}\label{eq:cross}
(\mM\bfa) \times (\mM\bfb) = \det(\mM) \mM^{-\mathrm{T}} (\bfa \times \bfb),
\end{equation}
which holds for any invertible matrix $\mM$ and follows from a straightforward calculation. Although $\tau_\bfx$ and $\kappa_\bfx^2$ are rational for any rational parametrization~$\bfx$, the curvature $\kappa_\bfx$ is in general not rational.

The following lemma describes the behavior of the curvature and torsion under reparametrization, for instance by a M\"obius transformation.

\begin{lemma} \label{Moeb-curvat}
Let $\bfx$ be the rational parametrization \eqref{eq:parametrizations} and let $\phi \in C^3(U)$, with $U\subset \RR$ open. Then
\[ \kappa_{\bfx \circ \phi} = \kappa_\bfx \circ \phi,\qquad \tau_{\bfx \circ \phi} = \tau_\bfx\circ\phi,\]
whenever both sides are defined.
\end{lemma}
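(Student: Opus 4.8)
The plan is to reduce everything to the chain rule together with two elementary facts about the cross product: that $\bfx'\times\bfx' = 0$, and that $\bfx'\times\bfx''$ is orthogonal to both $\bfx'$ and $\bfx''$. Write $\boldsymbol{y} := \bfx\circ\phi$, and abbreviate by $\bfx',\bfx'',\bfx'''$ the derivatives of $\bfx$ evaluated at $\phi(t)$. Differentiating three times yields
\[
\boldsymbol{y}' = \phi'\,\bfx', \qquad
\boldsymbol{y}'' = (\phi')^2\,\bfx'' + \phi''\,\bfx', \qquad
\boldsymbol{y}''' = (\phi')^3\,\bfx''' + 3\phi'\phi''\,\bfx'' + \phi'''\,\bfx'.
\]

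First I would compute $\boldsymbol{y}'\times\boldsymbol{y}''$. Expanding by bilinearity, the cross term is a scalar multiple of $\bfx'\times\bfx' = 0$, so the only surviving contribution is $\boldsymbol{y}'\times\boldsymbol{y}'' = (\phi')^3(\bfx'\times\bfx'')$. Taking norms and using $\Vert\boldsymbol{y}'\Vert = |\phi'|\,\Vert\bfx'\Vert$ and $\Vert\boldsymbol{y}'\times\boldsymbol{y}''\Vert = |\phi'|^3\,\Vert\bfx'\times\bfx''\Vert$, the factor $|\phi'|^3$ cancels between numerator and denominator of $\kappa_{\boldsymbol{y}}$, giving $\kappa_{\bfx\circ\phi} = \kappa_\bfx\circ\phi$. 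The absolute values cause no difficulty precisely because $\kappa$ is assembled from norms.

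For the torsion I would form the triple product $\langle \boldsymbol{y}'\times\boldsymbol{y}'', \boldsymbol{y}'''\rangle$. Substituting $\boldsymbol{y}'\times\boldsymbol{y}'' = (\phi')^3(\bfx'\times\bfx'')$ and the expansion of $\boldsymbol{y}'''$, the two lower-order terms $3\phi'\phi''\,\bfx''$ and $\phi'''\,\bfx'$ are annihilated by the inner product, since $\bfx'\times\bfx''$ is orthogonal to both $\bfx'$ and $\bfx''$. Only $(\phi')^6\langle\bfx'\times\bfx'', \bfx'''\rangle$ remains. Dividing by $\Vert\boldsymbol{y}'\times\boldsymbol{y}''\Vert^2 = (\phi')^6\,\Vert\bfx'\times\bfx''\Vert^2$ cancels the $(\phi')^6$ factor and yields $\tau_{\bfx\circ\phi} = \tau_\bfx\circ\phi$.

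There is no genuine obstacle here: this is the standard fact that curvature and torsion are geometric invariants, independent of the chosen parametrization. The only points requiring care are keeping track of the third-derivative chain-rule expansion without dropping terms, and observing that the statement only claims equality where both sides are defined, so one need not worry about points where $\phi' = 0$ (at which $\boldsymbol{y}' = 0$ and the quantities on the left are undefined anyway). In particular, the argument never uses monotonicity or invertibility of $\phi$; the $C^3$ regularity, with nonvanishing first derivative at the relevant points, is all that is needed.
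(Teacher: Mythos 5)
Your proposal is correct and follows essentially the same route as the paper's proof: chain-rule expansion of the first three derivatives of $\bfx\circ\phi$, the identity $\bfx'\times\bfx'=0$ to collapse the cross product to $(\phi')^3(\bfx'\times\bfx'')$, and cancellation of the resulting powers of $\phi'$ in the quotients. The only difference is that you write out the torsion computation (including the orthogonality argument killing the lower-order terms of $\boldsymbol{y}'''$) explicitly, where the paper dismisses it with ``similarly''.
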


\begin{proof}
Writing $\tilde{\bfx} := \bfx\circ \phi$ and using the chain rule, one finds
\begin{align*}
\tilde{\bfx}'(t)
 & = \bfx'  \big(\phi(t)\big)\cdot \phi'(t),\\
\tilde{\bfx}''(t)
 & = \bfx'' \big(\phi(t)\big)\cdot \big(\phi'(t)\big)^2 +\bfx'\big(\phi(t)\big)\cdot \phi''(t),\\
\tilde{\bfx}'''(t)
 & = \bfx'''\big(\phi(t)\big)\cdot \big(\phi'(t)\big)^3 +3\bfx''\big(\phi(t)\big)\cdot \phi'(t)\cdot \phi''(t)+\bfx'\big(\phi(t)\big)\cdot \phi'''(t),
\end{align*}
whenever $t\in U$ and $\bfx$ is defined at $\phi(t)$. Therefore
\[ \kappa_{\bfx \circ \phi}(t)
 = \frac{\left\| \tilde{\bfx}'(t) \times \tilde{\bfx}''(t) \right\|}{\left\| \tilde{\bfx}'(t) \right\|^3}
 = \frac{\left\| \bfx'\big(\phi(t)\big) \times \bfx''\big(\phi(t)\big)\right\|\cdot \left| \phi'(t)\right|^3}{\left\| \bfx'\big(\phi(t)\big) \right\|^3 \cdot \left| \phi'(t)\right|^3}
 = \big( \kappa_\bfx \circ \phi\big) (t), \]
and similarly one finds $\tau_{\bfx\circ \phi} = \tau_\bfx \circ \phi$.
\end{proof}

\section{Symmetry detection} \label{sec-sym-detec}
\noindent In this section we derive a criterion for the presence of nontrivial symmetries $f(\bfx) = \mQ\bfx + \bfb$ of curves of type \eqref{eq:parametrizations}, together with an efficient method for checking this criterion. The cases $\det(\mQ) = \pm 1$ need to be checked separately, but are considered simultaneously using linked $\pm$ and $\mp$ signs consistently throughout the paper. The resulting method is summarized in Algorithm {\tt Symm$^\pm$}.

\subsection{A criterion for the presence of symmetries}\label{subsec-main-th}
\noindent For any parametric curve $\bfx$ as in \eqref{eq:parametrizations}, write
\[\kappa_\bfx^2(t) =: \frac{A(t)}{B(t)}, \qquad \tau_\bfx(t)=:\frac{C(t)}{D(t)},\]
with $(A,B)$ and $(C,D)$ pairs of coprime polynomials. Let
\begin{equation}\label{eq:G}
G^\pm_\bfx := \gcd\big(K_\bfx, T^\pm_\bfx\big),
\end{equation}
with
\begin{equation}\label{eq:KT}
K_\bfx (t, s) := A(t)B(s) - A(s)B(t),\qquad T^\pm_\bfx (t, s) := C(t)D(s) \mp C(s)D(t)
\end{equation}
the result of clearing denominators in the equations
\begin{equation}\label{eq:kappatauts}
\kappa_\bfx^2(t) - \kappa_\bfx^2(s) = 0,\qquad \tau_\bfx (t) \mp \tau_\bfx (s) = 0.\ \ \ \
\end{equation}
Similarly, associate to any M\"obius transformation $\varphi$
the \emph{M\"obius-like} polynomial
\begin{equation}\label{eq:F}
F(t, s) := (ct + d)s - (at + b),\qquad ad - bc\neq 0,
\end{equation}
as the result of clearing denominators in $s - \varphi(t) = 0$. We call $F$ \emph{trivial} when $F(t, s) = s - t$, i.e., when the associated M\"obius transformation is the identity. Note that $F$ is irreducible since $ad - bc\neq 0$. 

\begin{theorem} \label{funda-sym}
Consider the curve $\CCC$ defined by $\bfx$ in \eqref{eq:parametrizations} and let $G^\pm_\bfx$ be as above. Then $\CCC$ has a nontrivial symmetry $f(\bfx) = \mQ\bfx + \bfb$, with $\det(\mQ) = \pm 1$, if and only if there exists a nontrivial polynomial $F$ of type \eqref{eq:F}, associated with a M\"obius transformation $\varphi$, such that $F$ divides $G^\pm_\bfx$ and the parametrizations $\bfx$ and $\bfx\circ \varphi$ have identical speed,
\begin{equation}\label{new-cond}
\|\bfx'\| = \|(\bfx \circ \varphi)'\|.
\end{equation}
\end{theorem}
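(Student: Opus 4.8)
The plan is to prove both implications by using Theorem~\ref{th-fund} to pass between a symmetry $f$ and its M\"obius transformation $\varphi$, combining the invariance relations \eqref{eq:kappatauinv} with the reparametrization behaviour of Lemma~\ref{Moeb-curvat}. Throughout I would work on a connected open interval on which $\bfx$, $\varphi$ and the Frenet apparatus are all defined and $\kappa_\bfx > 0$ (possible since $\CCC$ is not a line); as every map in sight is rational, hence real-analytic, any identity established on such an interval extends to the whole parameter domain.

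For the forward implication, assume $\CCC$ has a nontrivial symmetry $f(\bfx) = \mQ\bfx + \bfb$ with $\det(\mQ) = \pm 1$. By Theorem~\ref{th-fund} there is a unique nontrivial $\varphi$ with $f\circ\bfx = \bfx\circ\varphi$; let $F$ be its M\"obius-like polynomial. The speed condition is immediate: differentiating $f\circ\bfx = \bfx\circ\varphi$ gives $(\bfx\circ\varphi)' = \mQ\bfx'$, and orthogonality of $\mQ$ yields $\|(\bfx\circ\varphi)'\| = \|\bfx'\|$. For the divisibility I combine \eqref{eq:kappatauinv} with Lemma~\ref{Moeb-curvat}: from $\bfx\circ\varphi = f\circ\bfx$ one gets $\kappa_\bfx\circ\varphi = \kappa_{\bfx\circ\varphi} = \kappa_{f\circ\bfx} = \kappa_\bfx$ and $\tau_\bfx\circ\varphi = \tau_{\bfx\circ\varphi} = \tau_{f\circ\bfx} = \pm\,\tau_\bfx$. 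Substituting $s = \varphi(t)$ into $\kappa_\bfx^2(s) - \kappa_\bfx^2(t)$ and $\tau_\bfx(s)\mp\tau_\bfx(t)$ and clearing denominators shows that $K_\bfx(t,\varphi(t))$ and $T^\pm_\bfx(t,\varphi(t))$ vanish identically in $t$. Since $F$ is irreducible and its zero locus is exactly the graph $s = \varphi(t)$, both $K_\bfx$ and $T^\pm_\bfx$ are divisible by $F$, whence $F \mid \gcd(K_\bfx, T^\pm_\bfx) = G^\pm_\bfx$.

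For the converse, suppose a nontrivial $F$ with transformation $\varphi$ divides $G^\pm_\bfx$ and that $\|\bfx'\| = \|(\bfx\circ\varphi)'\|$. Then $K_\bfx$ and $T^\pm_\bfx$ vanish on the graph of $\varphi$, so $\kappa_\bfx^2(\varphi(t)) = \kappa_\bfx^2(t)$ and $\tau_\bfx(\varphi(t)) = \pm\tau_\bfx(t)$; with Lemma~\ref{Moeb-curvat} and $\kappa\geq 0$ this gives $\kappa_{\bfx\circ\varphi} = \kappa_\bfx$ and $\tau_{\bfx\circ\varphi} = \pm\tau_\bfx$ as functions of $t$. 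Now the speed condition does the decisive work: since $\|(\bfx\circ\varphi)'\| = \|\bfx'\|$, the two parametrizations share the same arc-length function, so passing to this common arc-length parameter turns $\bfx$ and $\bfx\circ\varphi$ into unit-speed curves with identical curvature and, in the $+$ case, identical torsion as functions of arc length. The fundamental theorem of space curves then produces a direct isometry $f$ with $f\circ\bfx = \bfx\circ\varphi$. In the $-$ case one has $\tau_{\bfx\circ\varphi} = -\tau_\bfx$; precomposing with a fixed reflection $R$, for which $\tau_{R\circ\bfx} = -\tau_\bfx$ by \eqref{eq:kappatauinv}, yields an opposite isometry $f$ with $f\circ\bfx = \bfx\circ\varphi$, so $\det(\mQ) = \pm 1$ as claimed. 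Since $\varphi$ is nontrivial and $\bfx$ is proper, $f$ cannot be the identity, and Theorem~\ref{th-fund} shows $\CCC$ is symmetric.

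I expect the delicate point to be the converse, specifically the passage from the pointwise equalities $\kappa_{\bfx\circ\varphi} = \kappa_\bfx$ and $\tau_{\bfx\circ\varphi} = \pm\tau_\bfx$ in the parameter $t$ to actual congruence of the curves. This is exactly where the hypothesis $\|\bfx'\| = \|(\bfx\circ\varphi)'\|$ is indispensable: without equal speeds the two curves would agree in curvature and torsion at matched parameter values but \emph{not} as functions of arc length, and the fundamental theorem would fail to apply. Additional care is needed at the finitely many parameters where $\kappa_\bfx$ vanishes or where $\bfx$ or $\varphi$ is undefined, which I would handle by establishing $f\circ\bfx = \bfx\circ\varphi$ on a subinterval and extending by analyticity to the whole of $\CCC$.
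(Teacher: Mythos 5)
Your proof is correct and takes essentially the same route as the paper's: Theorem~\ref{th-fund} together with the invariance relations \eqref{eq:kappatauinv}, Lemma~\ref{Moeb-curvat}, and an irreducibility/B\'ezout argument give the forward direction, while equal speeds plus the Fundamental Theorem of Space Curves applied to the common arc-length reparametrization give the converse. The only minor differences are that you globalize the local congruence by rationality/analytic continuation where the paper argues that the irreducible algebraic curves $\CCC$ and $f(\CCC)$ share infinitely many points, and that you spell out two details the paper leaves implicit (the reflection trick for the $\det(\mQ)=-1$ torsion case, and the nontriviality of the isometry $f$ obtained in the converse).
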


The zeroset of $F$ is the graph of $\varphi$, which is either a rectangular hyperbola with horizontal and vertical asymptotes when $c\neq 0$, or a line with nonzero and finite slope $a/d$ when $c = 0$.
Whenever $F$ is a factor of $G^\pm_\bfx$, the corresponding hyperbola or line is contained in the zeroset of $G^\pm_\bfx$; see Figure~\ref{fig:Example1}.

\begin{proof}[Proof of Theorem \ref{funda-sym}] ``$\Longrightarrow$'': If $\CCC$ is invariant under a nontrivial isometry $f(\bfx)=\mQ\bfx+\bfb$, with $\det(\mQ) = \pm 1$, by Theorem~\ref{th-fund} there exists a M\"obius transformation $\varphi$ such that $f \circ \bfx = \bfx \circ \varphi$. Let $F$ be the M\"obius-like polynomial associated with $\varphi$. The points $(t,s)$ for which $K_\bfx(t, s) = T^\pm_\bfx(t,s) = 0$ are the points satisfying $\kappa_\bfx(s) = \kappa_\bfx(t)$ and $\tau_\bfx(s) = \pm \tau_\bfx(t)$. This includes the zeroset $\left\{\big(t, s\big)\,:\,s = \varphi(t)\right\}$ of $F(t,s)$, since
\[
\kappa_\bfx \circ \varphi = \kappa_{\bfx\circ \varphi} = \kappa_{f\circ \bfx} = \kappa_\bfx,\qquad
\tau_\bfx \circ \varphi = \tau_{\bfx \circ \varphi} = \tau_{f\circ \bfx} = \det(\mQ) \tau_\bfx = \pm \tau_\bfx
\]
by Lemma \ref{Moeb-curvat} and \eqref{eq:kappatauinv}. Since $F$ is irreducible, B\'ezout's theorem implies that $F$ divides $K_\bfx$ and $T^\pm_\bfx$, and therefore $G^\pm_\bfx$ as well. Furthermore, since $\mQ$ is orthogonal, the parametrizations have equal speed,
\[ \|(\bfx \circ \varphi)'\|  = \|(f\circ \bfx)'\| = \|(\mQ \bfx + \bfb)'\|  = \|\mQ \bfx'\| = \|\bfx'\|. \]

``$\Longleftarrow$'': Let $\varphi$ be the nontrivial transformation associated to $F$. Let $t_0 \in I\subset \RR$ be such that $\bfx(t)$ is a regular point on $\CCC$ for every $t\in I$, and consider the arc length function
\[ s = s(t) := \int_{t_0}^t \|\bfx'(t)\|\rmd t,\qquad t\in I,\]
which (locally) has an infinitely differentiable inverse $t = t(s)$. By \eqref{new-cond},
\[ \left\| \frac{\rmd }{\rmd s} \big(\bfx\circ t\big)\right\|
 = \left\| \frac{\rmd \bfx}{\rmd t} \frac{\rmd t}{\rmd s} \right\|
 = 1
 = \left\| \frac{\rmd }{\rmd t} \big(\bfx\circ \varphi \big) \frac{\rmd t}{\rmd s} \right\|
 = \left\| \frac{\rmd }{\rmd s} \big(\bfx\circ \varphi \circ t\big)\right\|
 ,\]
so that $\bfx\circ t$ and $\bfx\circ \varphi \circ t$ are parametrized by arc length. Since $F$ divides $G^\pm_\bfx$, any zero $\big(t,\varphi(t)\big)$ of $F$ is also a zero of $K_\bfx$ and $T^\pm_\bfx$, implying that $\kappa_\bfx = \kappa_{\bfx} \circ \varphi$ and $\tau_\bfx = \pm \tau_{\bfx} \circ \varphi$. Then, by repeatedly applying Lemma \ref{Moeb-curvat},
\begin{equation}
\kappa_{\bfx \circ t} = \kappa_{\bfx} \circ t = \kappa_{\bfx\circ \varphi} \circ t = \kappa_{\bfx\circ \varphi\circ t}, \quad
\tau_{\bfx \circ t} =
\tau_{\bfx} \circ t =
\pm \tau_{\bfx \circ \varphi} \circ t =
\pm \tau_{\bfx \circ \varphi \circ t}.
\end{equation}
The Fundamental Theorem of Space Curves \cite[p. 19]{Docarmo} then implies that $\bfx\circ t$ and $\bfx\circ \varphi\circ t$ coincide on $s(I)$ up to an isometry $f(\bfx) = \mQ\bfx + \bfb$ with $\det(\mQ)=\pm 1$. Therefore $\CCC$ and $f(\CCC)$ have infinitely many points in common. Since $\CCC$ and $f(\CCC)$ are irreducible algebraic curves, it follows that $\CCC=f(\CCC)$ and therefore $f$ is a symmetry of $\CCC$.
\end{proof}

Note that the polynomial $G^\pm_\bfx$ cannot be identically 0. Indeed, $G^\pm_\bfx$ is identically $0$ if and only if $K_\bfx$ and $T^\pm_\bfx$ are both identically $0$, which happens precisely when $\kappa_\bfx$ and $\tau_\bfx$ are both constant. If $\kappa_\bfx = 0$ then $\CCC$ is a line, if $\tau_\bfx = 0$ and $\kappa_\bfx$ is a nonzero constant then $\CCC$ is a circle, and if $\kappa_\bfx, \tau_\bfx$ are both constant but nonzero then $\CCC$ is a circular helix, which is non-algebraic. All of these cases are excluded by hypothesis.

\subsection{Finding the M\"obius-like factors $F$ of $G^\pm_\bfx$} \label{Check-cond}
\noindent The criterion in Theorem \ref{funda-sym} requires to check if a bivariate polynomial $G = G^\pm_\bfx$ has \emph{real} factors of the form $F(t,s)=(ct+d)s-(at+b)$, with $ad-bc\neq 0$. However, $a,b,c,d$ need not be rational numbers, so that we need to factor over the algebraic or real numbers. This problem has been studied by several authors \cite{C04, CG06, CGKW, GR02}. However, since in our case we are looking for factors of a specific form, we develop an ad hoc method to check the condition.

Let $\GGG$ be the curve in the $(t,s)$-plane defined by $G(t,s)$. Let $t_0$ be such that the vertical line $\LLL$ at $t = t_0$ does not contain any zero of $\GGG$ where the partial derivative $G_s := \frac{\partial G}{\partial s}$ vanishes; see Figure \ref{fig:Example1}. These are the points $t_0$ for which the discriminant of $g(s) := G(t_0, s)$ does not vanish, which is up to a factor equal to the Sylvester resultant $\Res_s(G, G_s)$ and has degree at most $(2m_s - 1)m_t$ in $t_0$, with $(m_t, m_s)$ the bidegree of $G$. Therefore one can always find an integer abscissa $t_0$ with this property by checking for at most $(2m_s - 1)m_t + 1$ points $t_0$ whether the gcd of $g(s)$ and $G_s(t_0, s)$ is trivial.

If $G$ has a M\"obius-like factor $F$ as in \eqref{eq:F}, then the zeroset of $F$ intersects $\LLL$ in a single point $\bfp_0=(t_0, \xi)$ satisfying
\begin{equation} \label{valuesabc-1}
(ct_0 + d)\xi - (at_0 + b) = 0.
\end{equation}
Since $G_s(\bfp_0)\neq 0$, the equation $F(t,s) = 0$ implicitly defines a function $s = s(t)$ in a neighborhood of $\bfp_0$. Moreover, by differentiating the identity $F\big(t,s(t)\big) = 0$ once and twice with respect to $t$, and evaluating at $\bfp_0$, we find the relations
\begin{align}
-a + ds'_0 +c\cdot (\xi + t_0 s'_0)  & = 0,\label{valuesabc-2a}\\
d s''_0 + c\cdot (2s'_0 + t_0 s''_0) & = 0,\label{valuesabc-2b}
\end{align}
where $\xi = s(t_0)$, $s'_0 := s'(t_0)$, and $s''_0 := s''(t_0)$. In order to find expressions for $s'_0$, $s''_0$, we now use that the function $s(t)$ is also implicitly defined by $G(t, s) = 0$, because $F$ is a factor of $G$ and $G_s(\bfp_0)\neq 0$. Differentiating once and twice the identity $G\big(t,s(t)\big) = 0$ with respect to $t$ gives
\begin{equation} \label{eq:difffunc}
s'  = -\frac{G_t(t,s)}{G_s(t,s)},\qquad
s'' = -\frac{G_{tt}(t,s) +2G_{ts}(t,s)s' + G_{ss}(t,s)\big(s'\big)^2}{G_s(t,s)}.
\end{equation}
Evaluating these expressions at $\bfp_0$ yields expressions $s'_0 = s'_0 (\xi)$ and $s''_0 = s''_0 (\xi)$.

Now we distinguish the cases $d\neq 0$ and $d=0$. In the first case, we may assume $d=1$ by dividing all coefficients in the M\"obius transformation by $d$. In that case $2s_0' + t_0 s_0'' = 2\Delta/(ct_0 + 1)^3 \neq 0$ and \eqref{valuesabc-1}--\eqref{valuesabc-2b} yield rational expressions
\begin{equation}\label{eq:abcd=1}
c_1(\xi) := \frac{-s''_0}{2s'_0+t_0 s''_0},\ a_1(\xi) := s'_0 + c_1(\xi) (\xi + t_0 s'_0),\ b_1(\xi) := -a_1(\xi) t_0 + \xi + c_1(\xi) t_0\xi.
\end{equation}
The polynomial $F$ is a factor of $G$ if and only if the resultant $\Res_s(F,G)$ is identically $0$. Substituting $a_1(\xi), b_1(\xi), c_1(\xi)$, and $d=1$ into this resultant yields a polynomial $P_1(t)$, whose coefficients are rational functions of $\xi$. Let $R_1(\xi)$ be the gcd of the numerators of these coefficients and of $g(\xi)$. The real roots $\xi$ of $R_1(\xi)$ for which $a_1(\xi), b_1(\xi), c_1(\xi)$ are well defined and $\Delta_1 (\xi) := a_1(\xi) - b_1(\xi) c_1(\xi)\neq 0$ correspond to the M\"obius-like factors $F$ of $G$ as in \eqref{eq:F} with $d = 1$.

On the other hand, when $d=0$ we may assume $c=1$, and \eqref{valuesabc-1}--\eqref{valuesabc-2b} yield rational expressions
\begin{equation}\label{eq:ab}
a_0(\xi) := \xi + t_0 s'_0,\qquad b_0(\xi) := -a_0(\xi) t_0 + t_0\xi.
\end{equation}
Substituting $a_0(\xi), b_0(\xi), c = 1$, and $d = 0$ into the resultant $\Res_s(F,G)$ yields a polynomial $P_0(t)$, whose coefficients are rational functions of $\xi$. Let $R_0(\xi)$ be the gcd of the numerators of these coefficients and $g(\xi)$. The real roots $\xi$ of $R_0(\xi)$ for which $a_0(\xi)$ and $b_0(\xi)$ are well defined and $\Delta_0(\xi) := -b_0(\xi)$ is nonzero correspond to the M\"obius-like factors $F$ of $G$ as in \eqref{eq:F} with $d = 0$. We obtain the following theorem.
\begin{theorem} \label{factor}
The polynomial $G$ has a real M\"obius-like factor $F$ as in \eqref{eq:F} with $d\neq 0$ (resp. $d = 0$) if and only if $R_1(\xi)$ (resp. $R_0(\xi)$) has a real root. Furthermore, every such real root provides a factor of this form.
\end{theorem}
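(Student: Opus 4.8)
The plan is to set up a correspondence between real Möbius-like factors $F$ of $G$ with $d\neq 0$ (resp. $d=0$) and real roots $\xi$ of $R_1$ (resp. $R_0$) meeting the stated nondegeneracy conditions, and then to read off both implications and the ``furthermore'' clause from it. I would carry out the case $d\neq 0$ in full and note that the case $d=0$ is identical up to having one free parameter fewer, so that only the first derivative $s_0'$ enters \eqref{eq:ab}.

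The geometric core is the claim that each real Möbius-like factor $F\mid G$ with $d\neq 0$ meets $\LLL$ in a single simple point $\bfp_0=(t_0,\xi)$ of $G$ at which the branch of $G$ is exactly the branch of $F$. Writing $G=F\cdot H$ in $\RR[t,s]$ and using $F(\bfp_0)=0$, one gets $G_s(\bfp_0)=F_s(\bfp_0)H(\bfp_0)$; the defining property of $t_0$, namely that every root of $g(s)=G(t_0,s)$ is simple, gives $G_s(\bfp_0)\neq 0$, whence $F_s(\bfp_0)\neq 0$ and $H(\bfp_0)\neq 0$. Thus only the $F$-branch passes through $\bfp_0$, and the implicit function $s=s(t)$ defined near $\bfp_0$ by $F=0$ agrees to all orders with the one defined by $G=0$. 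In particular the quantities $s_0'=s_0'(\xi)$ and $s_0''=s_0''(\xi)$ produced from $G$ through \eqref{eq:difffunc} equal those obtained by differentiating $F$, so that $a,b,c$ satisfy \eqref{valuesabc-1}--\eqref{valuesabc-2b} after the normalization $d=1$. The nondegeneracy $2s_0'+t_0 s_0''=2\Delta/(ct_0+1)^3\neq 0$ then allows one to solve this linear system uniquely for $c,a,b$, recovering precisely $c_1(\xi),a_1(\xi),b_1(\xi)$ of \eqref{eq:abcd=1}.

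Granting this, both implications follow. For ``$\Rightarrow$'', given such an $F$ the value $\xi=\varphi(t_0)$ is real and satisfies $g(\xi)=0$ since $\bfp_0\in G$, while $F\mid G$ forces $\Res_s(F,G)$ to vanish identically in $t$; substituting $a_1(\xi),b_1(\xi),c_1(\xi),d=1$ this says $P_1(t)\equiv 0$, so every coefficient-numerator of $P_1$ vanishes at $\xi$. Hence $\xi$ is a common root of $g$ and of these numerators, i.e.\ a root of $R_1$, with $\Delta_1(\xi)=a_1-b_1c_1=ad-bc\neq 0$ and the coefficients well defined. For ``$\Leftarrow$'', a real root $\xi$ of $R_1$ at which the coefficients are well defined and $\Delta_1(\xi)\neq 0$ yields via \eqref{eq:abcd=1} an irreducible candidate $F$; since $\xi$ annihilates every coefficient-numerator of $P_1$, we have $\Res_s(F,G)=P_1(t)\equiv 0$, and as $F$ is irreducible and primitive of positive $s$-degree this forces $F\mid G$. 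The same computation shows that each admissible real root yields a genuine factor, which is the ``furthermore'' assertion.

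The main obstacle is the geometric core step: ensuring that the derivative data extracted from $G$ at $\bfp_0$ is genuinely the derivative data of the factor $F$, so that the substitutions $s_0'(\xi),s_0''(\xi)$ are legitimate. This rests entirely on $G_s(\bfp_0)\neq 0$, guaranteed by the simplicity of the roots of $g$. Two technical points must be checked alongside: that $t_0$ is not a vertical asymptote $-d/c$ of any candidate factor --- which is automatic once $t_0$ is chosen with $\mathrm{lc}_s(G)(t_0)\neq 0$, since $(ct+d)$ divides $\mathrm{lc}_s(G)$ whenever $F\mid G$ --- and the standard equivalence between $\Res_s(F,G)\equiv 0$ and $F\mid G$ for the irreducible primitive polynomial $F$, which requires attention to specializations where the $s$-leading coefficient $ct+d$ of $F$ vanishes.
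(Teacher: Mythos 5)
Your proposal follows essentially the same route as the paper's own derivation preceding the theorem: intersect a putative factor $F$ with the vertical line $\LLL$, use $G_s(\bfp_0)\neq 0$ to identify the implicit derivatives of $G$ at $\bfp_0$ with those of $F$, solve \eqref{valuesabc-1}--\eqref{valuesabc-2b} for $a,b,c$ after normalizing $d$, and characterize divisibility by the identical vanishing of $\Res_s(F,G)$, hence by $\xi$ being a common root of $g$ and the coefficient numerators, i.e.\ a root of $R_1$ (resp.\ $R_0$). Your extra technical points (the factorization $G=FH$ justifying the branch matching, the leading-coefficient condition excluding vertical asymptotes at $t_0$, and the care with the resultant-divisibility equivalence for the irreducible $F$) are sound refinements of details the paper leaves implicit, not a different method.
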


Note that the cases $d = 0$ and $d \neq 0$ can be computed in parallel.

\begin{figure}
\begin{center}
\includegraphics[scale=0.8]{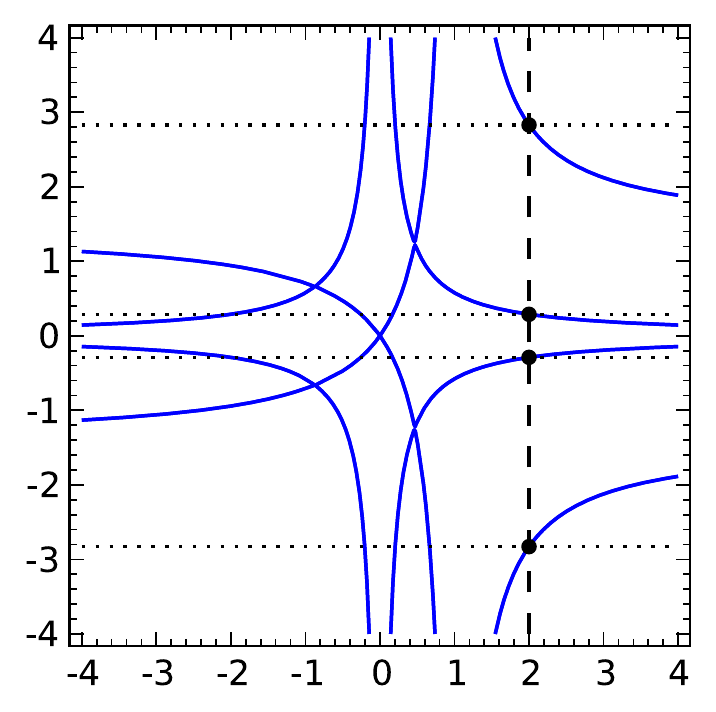}
\end{center}
\caption{The zeroset (solid) of the polynomial $G=G^\pm_\bfx$ intersects the vertical line $\LLL$ (dashed) in the points $(2, \pm \sqrt{8})$ and $(2, \pm 1/\sqrt{12})$ in Example \ref{ex1}.}\label{fig:Example1}
\end{figure}

\begin{example} \label{ex1}
Consider the bivariate polynomial
\[ G(t,s) = 3s^4t^4 - 6s^4t^3 + 3s^4t^2 - 6s^2t^4 - s^2t^2 + 2s^2t - s^2 + 2t^2. \]
The vertical line $\LLL := \{t = t_0 := 2\}$ does not intersect the zeroset of $G$ in a point where $G_s$ vanishes, since the discriminant of $g(\xi) := G(t_0, \xi) = 12\xi^4 - 97\xi^2 + 8$ is nonzero (see Figure \ref{fig:Example1}). Evaluating \eqref{eq:difffunc} at $\bfp_0 = (2,\xi)$ yields
\begin{align*}
s'_0  & = -\frac{18\xi^4-97\xi^2+4}{\xi(24\xi^2-97)}, \\
s''_0 & =  \frac{16416\xi^{10}-206316\xi^8+879669\xi^6-1387682\xi^4+55302\xi^2+1552}{\xi^3(24\xi^2-97)^3}.
\end{align*}
When $d\neq 0$, we may assume $d=1$ and Equations \eqref{eq:abcd=1} yield
\begin{align*}
c_1(\xi) & = -\frac{1}{2}\frac{16416\xi^{10}-206316\xi^8+879669\xi^6-1387682\xi^4+55302\xi^2+1552}{6048\xi^{10}-66636\xi^8+256371\xi^6-456385\xi^4+17666\xi^2+1552},\\
a_1(\xi) & = -\frac{1}{2}\frac{\xi(72\xi^8+2019\xi^6-21192\xi^4+40138\xi^2-4656)}{504\xi^8-5511\xi^6+20905\xi^4-36290\xi^2-1552},\\
b_1(\xi) & = -\frac{(9504\xi^{10}-163836\xi^8+879621\xi^6-1434145\xi^4+133646\xi^2-4656)\xi}{(504\xi^8-5511\xi^6+20905\xi^4-36290\xi^2-1552)(12\xi^2-1)}.
\end{align*}
Substituting these expressions into the resultant $\Res_s (F, G)$ and taking the gcd of the numerators of its coefficients and $g$ yields a polynomial $R_1(\xi)=\xi^2 - 8$. We find $F_1(t,s) = -st+\sqrt{2}t+s$ for $\xi = \sqrt{8}$ and $F_2(t,s)=-st-\sqrt{2}t+s$ for $\xi=-\sqrt{8}$ as factors of $G$. In the case $d=0$, we may assume $c=1$ and we get
\[ a_0(\xi) = - \frac{(\xi^2 - 8)(12\xi^2 - 1)}{\xi(24\xi^2 - 97)}, \qquad b_0(\xi) = 4 \frac{18\xi^4 - 97\xi^2 + 4}{ \xi(24\xi^2 - 97)}. \]
Here $R_0(\xi)=12\xi^2-1$ and we obtain $F_3(t,s)=st-\frac{1}{3}\sqrt{3}$ for $\xi=1/\sqrt{12}$ and $F_4(t,s)=st+\frac{1}{3}\sqrt{3}$ for $\xi=-1/\sqrt{12}$. The entire computation takes a fraction of a second when implemented in {\tt Sage} on a modern laptop. For more details we refer to the worksheet accompanying this paper \cite{WebsiteGeorg}.
\end{example}

\begin{algorithm}[h!]
\begin{algorithmic}[1]
\REQUIRE A proper parametrization $\bfx$ of a space curve $\CCC$, not a line or a circle.
\ENSURE The number of symmetries $f(\bfx) = \mQ\bfx + \bfb$, with $\det(\mQ)=\pm 1$, of $\CCC$.
\STATE Find the bivariate polynomials $K, T^\pm$, and $G^\pm$ from \eqref{eq:G} and \eqref{eq:KT}.
\STATE Find the resultant $\Res_s(F, G^\pm)$, with $F$ as in \eqref{eq:F}.
\STATE Let $t_0$ be such that the discriminant of $g^\pm (\xi) := G^\pm(t_0, \xi)$ does not vanish.
\STATE Find the gcd $R_1(\xi)$ of $g^\pm$ and the numerators of the coefficients of the polynomial $P_1(t)$ obtained by substituting $d = 1$ and \eqref{eq:abcd=1} into $\Res_s(F, G^\pm)$.
\STATE Find the real roots of $R_1(\xi)$ for which \eqref{eq:kappatauts} is well defined, each defining a M\"obius transformation by substituting \eqref{eq:abcd=1} and $d = 1$ in \eqref{eq:Moebius}.
\STATE Let $n_1$ be the number of these M\"obius transformations satisfying \eqref{new-cond}.
\STATE Find the gcd $R_0(\xi)$ of $g^\pm$ and the numerators of the coefficients of the polynomial $P_0(t)$ obtained by substituting $c = 1$, $d = 0$, \eqref{eq:ab} into $\Res_s(F, G^\pm)$.
\STATE Find the real roots of $R_0(\xi)$ for which \eqref{eq:kappatauts} is well defined, each defining a M\"obius transformation by substituting \eqref{eq:ab}, $c = 1$ and $d = 0$ in \eqref{eq:Moebius}.
\STATE Let $n_0$ be the number of these M\"obius transformations satisfying \eqref{new-cond}.
\STATE Return ``{\tt The curve has $n_0 + n_1$ symmetries with $\det(\mQ) = \pm 1$}''.
\end{algorithmic}
\caption*{{\bf Algorithm} {\tt Symm$^\pm$}}
\end{algorithm}

\subsection{The complete algorithm} \label{sec-alg}
\noindent Let $\bfx: \RR \dashrightarrow \CCC$ as in \eqref{eq:parametrizations} be a parametric curve of degree $m$. Distinguishing the cases $d = 0, 1$, each tentative M\"obius transformation can be written as \begin{equation*}
\varphi_\xi(t) = \frac{a_d(\xi)t + b_d(\xi)}{c_d(\xi)t + d},
\end{equation*}
with $\xi$ a root of $R_d$ and $a_d, b_d, c_d$ as in \eqref{eq:abcd=1}, \eqref{eq:ab}. Condition \eqref{new-cond} can be checked as follows. Squaring and clearing denominators yields an equivalent polynomial condition
\begin{equation}\label{eq:W}
W_\xi(t) = w_n(\xi)t^n + w_{n-1}(\xi)t^{n-1} + \cdots + w_0(\xi) \equiv 0
\end{equation}
of degree $n\leq 24m - 4$. By Theorem \ref{funda-sym}, a root $\xi$ of $R_d$ corresponds to a symmetry of $\CCC$ precisely when $W_\xi(t)$ vanishes identically. In other words, every root $\xi$ of
\begin{equation}\label{gcd}
\gcd (R_d, w_0,\ldots,w_n)
\end{equation}
defines a M\"obius transformation $\varphi_\xi$ corresponding to a symmetry $f_\xi := \bfx \circ \varphi_\xi \circ \bfx^{-1}$ as in Theorem~\ref{th-fund}. We thus arrive at Algorithm {\tt Symm$^\pm$} for determining the number of symmetries of the curve $\CCC$.

\section{Determining the symmetries} \label{sec-find}
\noindent Algorithm {\tt Symm$^\pm$} detects whether the parametric curve $\bfx$ from \eqref{eq:parametrizations} has nontrivial symmetries. In the affirmative case we would like to determine these symmetries. By Theorem~\ref{th-fund}, every such symmetry corresponds to a M\"obius transformation $\varphi = (at + b)/(ct + d)$, which corresponds to a M\"obius-like factor $F$ of $G$ computed by Algorithm {\tt Symm$^\pm$}. In this section we shall see how the symmetry $f(\bfx) = \mQ\bfx+\bfb$ can be computed from $\varphi$.

The commutative diagram in Theorem \ref{th-fund} describes the identity
\begin{equation} \label{fund-equality}
\mQ \bfx(t)+\bfb=\bfx\big(\varphi(t)\big).
\end{equation}
Let us distinguish the cases $d\neq 0$ and $d = 0$. In the latter case, \eqref{fund-equality} becomes
\[ \mQ\bfx(t) + \bfb = \bfx\big(\varphi(t)\big) = \bfx\big(\ta / t + \tb \big),\qquad \ta := b/c,\qquad \tb := a/c.\]
Applying the change of variables $t\longmapsto 1/t$ and writing $\tbfx(t) := \bfx(1/t)$, we obtain
\begin{equation} \label{spec}
\mQ\bfx(t) + \bfb =  \tbfx\big(\ta t + \tb\big).
\end{equation}
Without loss of generality, we assume that $\bfx(t)$ (respectively $\tbfx(t)$), and therefore any of its derivatives, is well defined at $t=\tb$ (respectively $t=0$), and that $\bfx'(0),\bfx''(0)$ are well defined, nonzero, and not parallel. The latter statement is equivalent to requiring that the curvature $\kappa_\bfx (t)$ at $t=0$ is well defined and distinct from $0$. This can always be achieved by applying a change of parameter of the type $t\longmapsto t+\alpha$. Observe that $\varphi(t)$ can be determined before applying this change, because afterwards the new M\"obius transformation is just $\varphi(t+\alpha)$.

Evaluating \eqref{spec} at $t = 0$ yields
\begin{equation}\label{eq:evaldzero}
\mQ\bfx(0) + \bfb = \tbfx(\tb),
\end{equation}
while differentiating once and twice and evaluating at $t=0$ yields
\begin{equation}\label{eq:diff12dzero}
\mQ\bfx'(0) = \ta \cdot \tbfx'(\tb),\qquad \mQ\bfx''(0) = \ta^2\cdot \tbfx''(\tb).
\end{equation}
Using \eqref{eq:cross} and that $\mQ$ is orthogonal, taking the cross product in \eqref{eq:diff12dzero} yields
\begin{equation}\label{eq:diff1xdiff2dzero}
\mQ\big( \bfx'(0)\times \bfx''(0) \big) = \det(\mQ)\cdot \ta^3\cdot \tbfx'(\tb) \times \tbfx''(\tb).
\end{equation}
Multiplying $\mQ$ by the matrix $\mB := [\bfx'(0),\ \bfx''(0),\ \bfx'(0) \times \bfx''(0)]$ therefore gives
\[ \mC := \big[\ta\cdot \tbfx'(\tb),\ \ta^2\cdot \tbfx''(\tb) ,\ \det(\mQ)\cdot \ta^3 \cdot \tbfx'(\tb) \times \tbfx''(\tb)\big] \]
and $\mQ = \mC\mB^{-1}$. One sets $\det(\mQ) = 1$ to find the orientation-preserving symmetries, and $\det(\mQ) = -1$ to find the orientation-reversing symmetries. One finds $\bfb$ from \eqref{eq:evaldzero}.

Next we address the case $d\neq 0$. After dividing the coefficients of $\varphi$ by $d$, we may assume $d=1$. As before, we assume that $\bfx(t)$ is well defined at $t=0$, and we again assume that the curvature $\kappa_\bfx (0)$ is well defined and nonzero. Differentiating \eqref{fund-equality} once and twice,
\begin{align}
\mQ\bfx'(t)  & = \bfx'\big(\varphi(t)\big)\cdot\varphi'(t) = \bfx'\left(\frac{a t + b}{c t + 1}\right)\frac{\Delta}{(c t + 1)^2}, \label{eq:first}\\
\mQ\bfx''(t) & = \bfx''\big(\varphi(t)\big)\big(\varphi'(t)\big)^2 + \bfx'\big(\varphi(t)\big)\varphi''(t) \label{eq:second}\\
           & =\displaystyle{\bfx''\left(\frac{a t + b }{c t + 1 }\right) \frac{\Delta^2}{(c t + 1 )^4} - 2\bfx'\left(\frac{a t + b}{c t + 1}\right) \frac{c \Delta}{(c t + 1)^3}}. \notag
\end{align}
Evaluating \eqref{eq:first} and \eqref{eq:second} at $t = 0$ yields
\begin{equation}\label{eq:diff12}
\mQ \bfx'(0)  = \bfx' (b)\cdot \Delta,\qquad
\mQ \bfx''(0) = \bfx''(b)\cdot \Delta^2 - 2 \bfx'(b)\cdot c\Delta.
\end{equation}
Using \eqref{eq:cross} and that $\mQ$ is orthogonal, taking the cross product in \eqref{eq:diff12} yields
\begin{equation}\label{eq:diff1xdiff2}
\mQ\big(\bfx'(0) \times \bfx''(0) \big)  = \det(\mQ)\cdot \Delta^3 \cdot \bfx'(b) \times \bfx''(b) .
\end{equation}
Since $\varphi$ is known, the matrix $\mQ$ can again be determined from its action on $\bfx'(0), \bfx''(0)$, and $\bfx'(0) \times \bfx''(0)$, which is given by Equations \eqref{eq:diff12} and \eqref{eq:diff1xdiff2}. One finds $\bfb$ by evaluating \eqref{fund-equality} at $t = 0$.

Once $\mQ$ and $\bfb$ are found, one can compute the set of fixed points of $f(\bfx) = \mQ\bfx + \bfb$ to determine the elements of the symmetry, i.e., the symmetry center, axis, or plane. 

\begin{example} \label{ex2}
Let $\CCC\subset \RR^3$ be the crunode space curve parametrized by
\[\bfx : t\longmapsto \left(\frac{t}{t^4+1},\frac{t^2}{t^4+1},\frac{t^3}{t^4+1}\right). \]
Applying Algorithm {\tt Symm$^+$} we get $G^+(t,s)=(t-s)(t+s)$. The first factor corresponds to the identity map $\varphi_1(t) = t$ and the trivial symmetry $f_1(\bfx) = \bfx$. The second factor corresponds to the M\"obius transformation $\varphi_2(t)=-t$. Clearly $\varphi_2$ satisfies Condition \eqref{new-cond}, so that Theorem \ref{funda-sym} implies that $\CCC$ has a nontrivial, direct symmetry $f_2(\bfx) = \mQ_2\bfx + \bfb_2$. With $a=-1$, $b=0$, $c=0$, $d=1$, and using that $\det(\mQ) = 1$,
\[ \mB = \left[\begin{array}{ccc} 1 & 0 & 0\\ 0 & 2 & 0\\ 0 & 0 & 2\end{array}\right],\
   \mC = \left[\begin{array}{ccr} -1 & 0 & 0\\ 0 & 2 & 0\\ 0 & 0 & -2\end{array}\right],\
   \mQ_2 := \mC \mB^{-1} = \left[\begin{array}{ccr} -1 & 0 & 0\\ 0 & 1 & 0\\ 0 & 0 & -1\end{array}\right]. \]
Evaluating \eqref{fund-equality} at $t = 0$ gives $\bfb_2 = (\mI - \mQ_2)\bfx(0)=0$, so that $\CCC$ is invariant under $f_2(\bfx) = \mQ_2\bfx$, which is a half-turn about the $y$-axis. Since there are no other factors in $G^+$, there are no direct symmetries corresponding to a M\"obius transformation with $d=0$.

As for the opposite symmetries, applying Algorithm {\tt Symm$^-$} yields $G^-(t,s) = (st-1)(st+1)$, whose factors correspond to the M\"obius transformations $\varphi_3(t)=1/t$ and $\varphi_4(t)=-1/t$. A direct computation shows that $\varphi_3$ and $\varphi_4$ satisfy Condition \eqref{new-cond}, and that they correspond to symmetries $f_3(\bfx) = \mQ_3 \bfx$ and $f_4(\bfx) = \mQ_4 \bfx$, with
\[ \mQ_3 = \left[\begin{array}{ccr} 0 & 0 &  1\\ 0 & 1 & 0\\  1 & 0 & 0\end{array}\right],\qquad
   \mQ_4 = \left[\begin{array}{ccr} 0 & 0 & -1\\ 0 & 1 & 0\\ -1 & 0 & 0\end{array}\right]. \]
The sets of fixed points of these isometries are the planes $\Pi_3 : z - x = 0$ and $\Pi_4 : z + x = 0$, which intersect in the symmetry axis of the half-turn; see Figure~\ref{fig:crunode}.
\end{example}

\begin{figure}
\begin{center}
\includegraphics[scale=0.16]{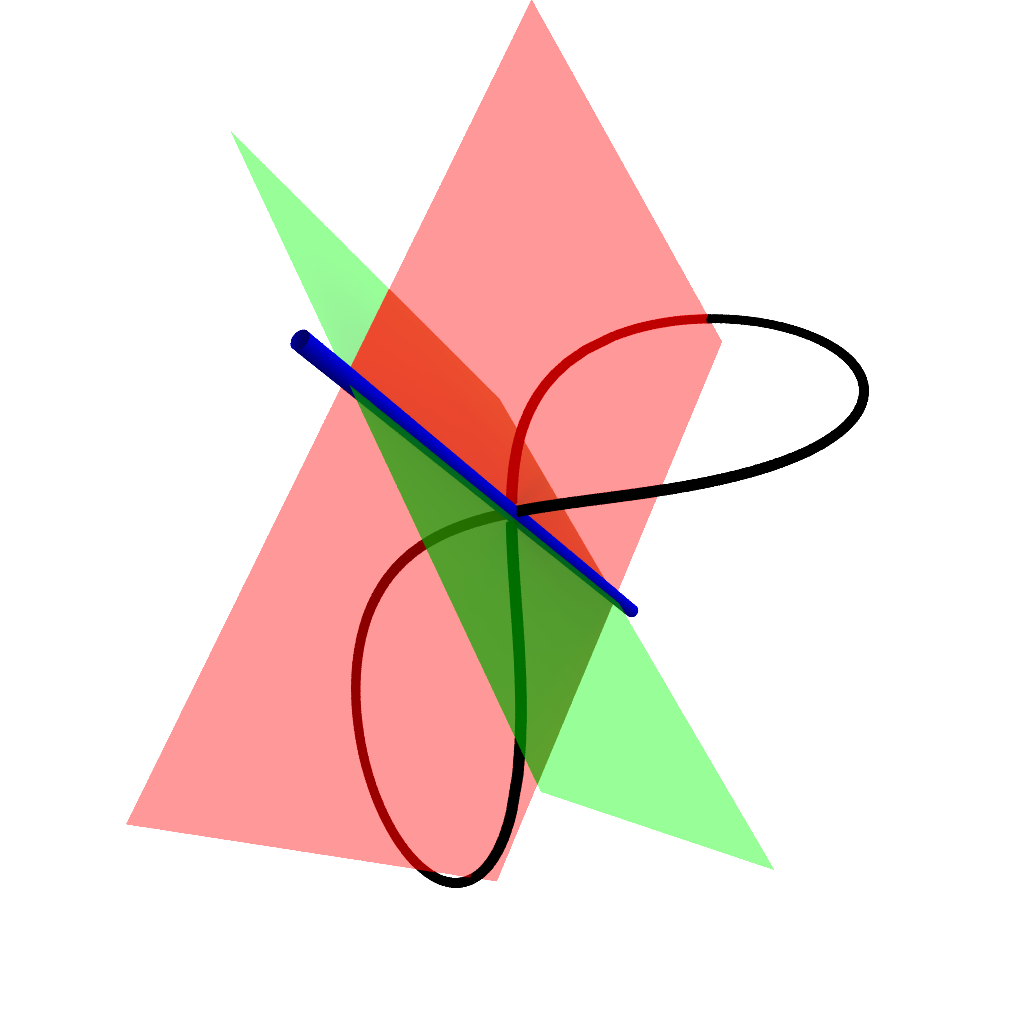}
\includegraphics[scale=0.16]{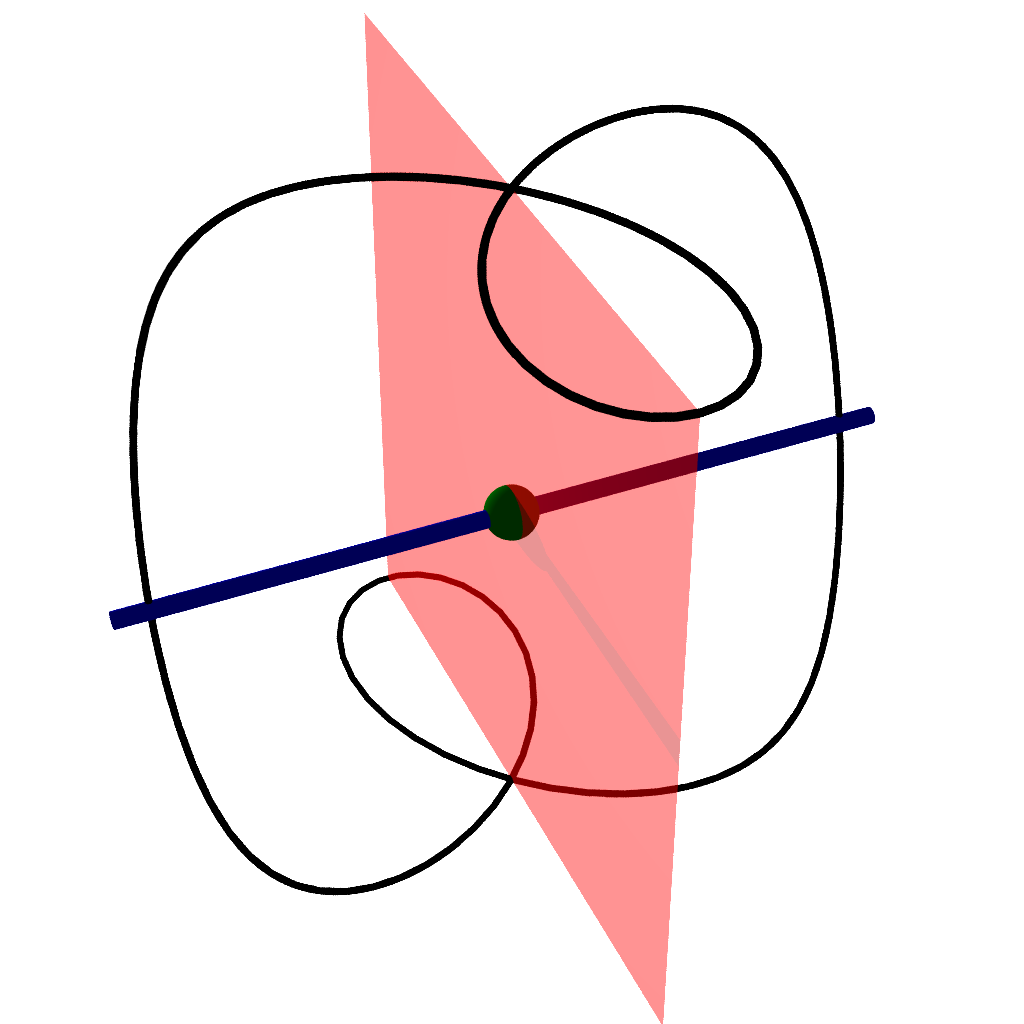}
\end{center}
\caption{Left: The crunode curve from Example \ref{ex2}, together with the fixed points of the half-turn and mirror symmetries. Right: The daisy of degree 8 from Example \ref{ex:daisy}, together with the fixed points of the central inversion, half-turn, and mirror symmetries.}\label{fig:crunode}
\end{figure}

\begin{example}\label{ex:daisy}
Consider the family of \emph{daisies} of increasing degree $m = 4j + 4$, which are given parametrically by
\begin{equation}\label{eq:daisies}
\bfx(t) = \left(
u\sum_{i=0}^j (-1)^i {2j\choose 2i} u^{2j - 2i} v^{2i},
v\sum_{i=0}^j (-1)^i {2j\choose 2i} u^{2j - 2i} v^{2i},
\frac{1 - t^{4j + 4}}{1 + t^{4j + 4}}
\right),
\end{equation}
with
\[ u = \frac{1-t^2}{1+t^2},\qquad v = \frac{2t}{1+t^2}, \qquad j = 0, 1, \ldots \]
Applying Algorithm {\tt Symm$^+$} for the case $j = 1$, we get $G^+(t,s) = (t-s)(st - 1)$. The first factor again corresponds to the trivial symmetry $f_1(\bfx) = \bfx$. The second factor corresponds to the M\"obius transformation $\varphi_2(t) = 1/t$. Clearly $\varphi_2$ satisfies Condition \eqref{new-cond}, so that Theorem \ref{funda-sym} implies that $\CCC$ has a nontrivial, direct symmetry $f_2(\bfx) = \mQ_2\bfx + \bfb_2$. With $a=0$, $b=1$, $c=1$, $d=0$, and using that $\det(\mQ) = 1$,
\[ \mB = \left[\begin{array}{ccc} 0 & -20 & 0\\ 2 & 0 & 0\\ 0 & 0 & 40\end{array}\right],\
   \mC = \left[\begin{array}{ccr} 0 & 20 & 0\\ 2 & 0 & 0\\ 0 & 0 & -40\end{array}\right],\
   \mQ_2 := \mC \mB^{-1} = \left[\begin{array}{ccr} -1 & 0 & 0\\ 0 & 1 & 0\\ 0 & 0 & -1\end{array}\right]. \]
Equation \eqref{eq:evaldzero} gives $\bfb_2 = \tbfx(\tb) - \mQ_2\bfx(0) = 0$, so that $\CCC$ is invariant under $f_2(\bfx) = \mQ_2\bfx$, which is a half-turn about the $y$-axis.

Similarly applying Algorithm {\tt Symm$^-$}, we get $G^-(t,s) = (s+t)(st+1)$, whose factors correspond to the M\"obius transformations $\varphi_3(t)=-t$ and $\varphi_4(t)=-1/t$. A direct computation shows that $\varphi_3$ and $\varphi_4$ satisfy Condition \eqref{new-cond}, and that they correspond to symmetries
\[ f_3(\bfx) = \begin{bmatrix}  1 & 0 & 0\\ 0 & -1 & 0\\ 0 & 0 &  1 \end{bmatrix}\bfx,\qquad 
   f_4(\bfx) = \begin{bmatrix} -1 & 0 & 0\\ 0 & -1 & 0\\ 0 & 0 & -1 \end{bmatrix}\bfx, \]
which are a reflection in the plane $\Pi_3: y = 0$ and a central inversion about the point $(0,0,0)$, respectively; see Figure~\ref{fig:crunode}.
\end{example}

\section{Performance} \label{exp-sec}

\subsection{Complexity} \label{sec:complexity}
\noindent 
Let us determine the \emph{arithmetic} complexity of Algorithm {\tt Symm$^\pm$}, i.e., the number of integer operations needed. In addition to using the standard Big O notation $\OOO$ for the space- and time-complexity analysis, we use the \emph{Soft O} notation $\tOOO$ to ignore any logarithmic factors in the time-complexity analysis. The \emph{bitsize} $\tau$ of an integer $k$ is defined as $\tau = \lceil \log_2 k \rceil + 1$; the bitsize of a parametrization~$\bfx$ (taken with integer coefficients) is the maximum bitsize of the coefficients of the  numerators and denominators of the components. The following theorem presents the arithmetic complexity of Algorithm {\tt Symm$^\pm$} when applied to parametric curves of varying degree $m$ and of fixed bitsize.

\begin{theorem}
For a parametric curve $\bfx$ as in \eqref{eq:parametrizations} with degree $m$, Algorithm {\tt Symm$^\pm$} finishes in $\tOOO(m^5)$ integer operations.
\end{theorem}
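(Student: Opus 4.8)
The plan is to track the arithmetic complexity of Algorithm {\tt Symm$^\pm$} step by step, bounding the degree and bitsize of every intermediate polynomial, and then summing the costs using the standard $\tOOO$ estimates for the basic subroutines (polynomial multiplication, gcd, resultant, and root-counting). Since the bitsize $\tau$ is held fixed, the entire analysis is driven by how the degree $m$ of $\bfx$ propagates through the computation. The key observation is that $\kappa_\bfx^2$ and $\tau_\bfx$ are rational functions whose numerator and denominator degrees grow linearly in $m$ (a bounded number of differentiations and products of degree-$m$ rational functions), so $A,B,C,D$ all have degree $\OOO(m)$. Consequently the bivariate polynomials $K_\bfx$ and $T^\pm_\bfx$ from \eqref{eq:KT}, being differences of products of two such univariate polynomials in the separate variables $t$ and $s$, have bidegree $\OOO(m)$ in each variable, and their bitsize remains $\OOO(1)$ in $m$ (only growing logarithmically, absorbed by $\tOOO$).

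\textbf{Step-by-step cost accounting.} First I would bound Line 1: forming $A,B,C,D$ costs $\tOOO(m)$ via fast arithmetic, and assembling $K,T^\pm,G^\pm$ requires a bivariate gcd of polynomials of bidegree $\OOO(m)$, which I would estimate at $\tOOO(m^2)$ or better by viewing it as a univariate gcd over the coefficient field. Next, Line 2 computes the resultant $\Res_s(F,G^\pm)$; since $F$ has degree $1$ in $s$, this resultant is essentially an evaluation of $G^\pm$ along $s=\varphi(t)$ after clearing denominators, giving a polynomial in $t$ (with symbolic coefficients $a,b,c,d$) of degree $\OOO(m)$, computable in $\tOOO(m)$ per coefficient. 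Line 3 needs a $t_0$ avoiding the vanishing of $\Res_s(G,G_s)$; as the excerpt notes, at most $(2m_s-1)m_t+1 = \OOO(m^2)$ trial abscissae suffice, each requiring a univariate gcd of $\OOO(m)$-degree polynomials at cost $\tOOO(m)$, for a subtotal of $\tOOO(m^3)$. The heart of the cost is Lines 4 and 7: here I substitute the \emph{rational} expressions \eqref{eq:abcd=1} (or \eqref{eq:ab}) in the variable $\xi$ into $\Res_s(F,G^\pm)$, producing $P_1(t)$ (resp.\ $P_0(t)$) whose coefficients $w_i(\xi)$ are rational functions of $\xi$; I would bound the $\xi$-degree of these coefficients by $\OOO(m)$ (since $s_0',s_0''$ are ratios of derivatives of $G$ evaluated at $t_0$, of degree $\OOO(m)$ in $\xi$) and the number of coefficients by $\OOO(m)$, then take the gcd $R_1(\xi)$ (resp.\ $R_0(\xi)$) of $\OOO(m)$ numerators each of degree $\OOO(m)$ together with $g(\xi)$.

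\textbf{The dominant term.} I expect the $m^5$ bound to come from Line 6 (and symmetrically Line 9): verifying Condition \eqref{new-cond} via the polynomial $W_\xi(t)$ of \eqref{eq:W}, which has $t$-degree $n\leq 24m-4=\OOO(m)$, with coefficients $w_j(\xi)$ that are themselves polynomials in $\xi$ of degree $\OOO(m)$. Computing these $\OOO(m)$ coefficients, each an $\OOO(m)$-degree polynomial in $\xi$ arising from squaring and clearing denominators in $\|\bfx'\|^2 = \|(\bfx\circ\varphi_\xi)'\|^2$, and then forming the gcd in \eqref{gcd} of the $\OOO(m)$ polynomials $w_0,\ldots,w_n$ and $R_d$, is where the degrees compound. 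A careful estimate should give each such gcd at $\tOOO(m^2)$ for two inputs of degree $\OOO(m^2)$, repeated $\OOO(m)$ times, landing at $\tOOO(m^3)$ for the gcd cascade, while the construction of all the $w_j(\xi)$ contributes the remaining factors, so that the total is dominated by a $\tOOO(m^5)$ term.

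\textbf{The main obstacle.} The genuinely delicate part will be obtaining tight, honest degree bounds on the coefficients $w_j(\xi)$ of $W_\xi$ and on $R_d(\xi)$, because these arise after substituting rational expressions in $\xi$ into a resultant and then clearing denominators, so their degrees in $\xi$ could a priori blow up multiplicatively rather than additively. I would therefore spend most of the effort proving that each substitution keeps the $\xi$-degree at $\OOO(m)$ rather than $\OOO(m^2)$, exploiting that $a_d(\xi),b_d(\xi),c_d(\xi)$ are ratios of $\OOO(m)$-degree polynomials and that $\varphi_\xi$ enters $\|(\bfx\circ\varphi_\xi)'\|^2$ only through a bounded number of compositions. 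Once these degree and bitsize bounds are pinned down, summing the per-step costs—$\tOOO(m^3)$ for the search in Line 3 and the gcd cascades, against the $\tOOO(m^5)$ contribution from assembling and reducing the $W_\xi$ data—yields the claimed $\tOOO(m^5)$ total, with the verification of \eqref{new-cond} being the bottleneck.
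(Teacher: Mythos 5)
There is a genuine gap: you have misidentified the bottleneck, and both of the two claims that drive your accounting are incorrect. First, your estimate of $\tOOO(m^2)$ ``or better'' for the deterministic bivariate gcd in Step 1 does not hold up: viewing $G^\pm = \gcd(K,T^\pm)$ as a univariate gcd over the coefficient field $\mathbb{Q}(t)$ does not give unit-cost field operations, because the Euclidean remainder sequence suffers coefficient growth --- the degrees in $t$ of the intermediate coefficients grow to $\OOO(m^2)$ --- so the naive count is invalid. The $\tOOO(m^2)$ figure is available only \emph{probabilistically} (the small primes modular gcd algorithm, \cite[Corollary 11.9.(i)]{VonZurGathen.Gerhard}); deterministically, the paper invokes the half-gcd/subresultant algorithm of Reischert \cite{Reischert} at cost $\tOOO(m^5)$, and \emph{this is precisely the step that accounts for the $\tOOO(m^5)$ in the statement}. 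The remark following the paper's proof makes this explicit: replacing Step 1 by the probabilistic gcd drops the whole algorithm to $\OOO(m^4)$, which shows that no other step can be the $m^5$ bottleneck.

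Second, your claimed $\tOOO(m^5)$ contribution from Lines 6 and 9 is never actually derived, and under honest degree bounds it does not appear. Your hope (stated as the ``main obstacle'') of proving that the $\xi$-degrees of the coefficients of $P_1(t)$ and of $W_\xi(t)$ stay at $\OOO(m)$ is false: substituting the rational functions \eqref{eq:abcd=1}, whose numerators and denominators have degree $\OOO(m)$ in $\xi$, into $\Res_s(F,G^\pm)$ requires raising them to powers $\OOO(m)$ (binomial expansion of $(a(\xi)t+b(\xi))^k(c(\xi)t+1)^{m_0-k}$), so after clearing denominators the coefficients genuinely have degree $\OOO(m^2)$ in $\xi$; the same holds for the $w_j(\xi)$ in \eqref{eq:W}. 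Fortunately this blow-up is harmless: with these $\OOO(m^2)$ bounds, Step 4 costs $\OOO(m^4)$ (the dominant part being $\OOO(m^2)$ additions of degree-$\OOO(m^2)$ polynomials), and checking \eqref{new-cond} in Step 6 costs only $\tOOO(m^3)$, since the gcd in \eqref{gcd} is $\OOO(m)$ univariate gcds of polynomials of degrees $\OOO(m)$ and $\OOO(m^2)$. So your proposal both lowballs the one step that is genuinely $\tOOO(m^5)$ and inflates, by assertion rather than computation, a step that is provably cheaper; as written, the accounting cannot be completed to a proof of the stated bound.
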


\begin{proof}
\emph{Step 1}. Using the Sch\"onhage-Strassen algorithm, two polynomials of degree $m$ with integer coefficients can be multiplied in $\tOOO(m)$ operations \cite[Table 8.7]{VonZurGathen.Gerhard}. Therefore the computation of $\kappa^2$ and $\tau$ can be carried out in $\tOOO(m)$ operations as well, resulting in rational functions whose numerators and denominators have degree $\OOO(m)$. As a consequence, $K$ and $T = T^\pm$ can also be computed in $\tOOO(m)$ operations, and have degrees $\OOO(m)$ in $t$ and $s$. The bivariate gcd $G = G^\pm$ can be computed in $\tOOO(m^5)$ operations using the `half-gcd algorithm' \cite{Reischert}, and has degree $\OOO(m)$ in both variables. Step 1 therefore takes at most $\tOOO(m^5)$ operations.

\emph{Step 2}. Since $F(t,s) = (ct + d)s - (at + b)$, the resultant $\Res_s(F,G)$ is the polynomial in $t$ obtained by replacing $s$ by $(at + b)/(ct + d)$ and clearing denominators. Writing $G(t, s) = \sum_{k=0}^{m_0} G_k(t) s^k$ as a sum of $m_0 + 1 = \OOO(m)$ terms, with each
$G_k(t)$ a polynomial of degree $\OOO(m)$, gives
\begin{align}\label{eq:resFG}
\Res_s(F, G) &= \sum_{k=0}^{m_0} G_k(t) ( a t + b )^k ( c t + 1 )^{m_0 - k},
\end{align}
which is a polynomial of degree $\OOO(m)$ in $a,b,c$, and $t$.

\emph{Step 3}. For any integer $t_0$, the two polynomials $G(t_0, s)$, $G_s(t_0, s)$ have degree $\OOO(m)$, so that their (univariate) gcd can be computed in $\tOOO(m)$ operations \cite[Corollary 11.6]{VonZurGathen.Gerhard}. Since we need to consider at most $\OOO(m^2)$ values of $t_0$, Step 3 takes $\tOOO(m^3)$ operations.

\emph{Step 4}. For any integer $t_0$ and unknown $\xi$, evaluating \eqref{eq:difffunc} at $(t_0, \xi)$ takes $\OOO(m^2)$ operations, yielding rational functions $s_0'(\xi)$ and $s_0''(\xi)$ whose numerator and denominator have degree $\OOO(m)$. Substituting these rational functions into \eqref{eq:abcd=1} takes $\tOOO(m)$ operations and yields rational functions
\begin{equation}\label{eq:aibici}
   a(\xi) = \frac{a_1(\xi)}{a_2(\xi)},\qquad
   b(\xi) = \frac{b_1(\xi)}{b_2(\xi)},\qquad
   c(\xi) = \frac{c_1(\xi)}{c_2(\xi)},
\end{equation}
whose numerator and denominator have degree $\OOO(m)$. Substituting these rational functions into \eqref{eq:resFG} followed by binomial expansion, i.e., computing
\begin{equation}\label{eq:binexp}
\frac{\displaystyle \sum_{n = 0}^k {k\choose n} a_1^n b_2^{m_0 - k + n} b_1^{k-n} a_2^{m_0 - n} t^n}{a_2^{m_0} b_2^{m_0}},\qquad
\frac{\displaystyle \sum_{n = 0}^{m_0 - k} {m_0 - k\choose n} c_1^n c_2^{m_0-n} t^n}{c_2^{m_0}},
\end{equation}
involves raising polynomials of degree $\OOO(m)$ to the power $\OOO(m)$, which can be computed in $\tOOO(m^2)$ operations using repeated squaring, i.e., $\OOO(\log m)$ multiplications of polynomials of degree $\OOO(m^2)$. All powers $a_i^l, b_i^l, c_i^l$, with $i = 1,2$ and $l = 0, \ldots, m_0$, in the above expression can therefore be computed in $\tOOO(m^3)$ operations, resulting in polynomials of degree $\OOO(m^2)$ in $\xi$. All remaining products can be computed in $\tOOO(m^3)$ operations, resulting in polynomials of degree $\OOO(m^2)$.

Now the rational functions in \eqref{eq:binexp} are determined, the product of their numerators can be carried out in $\tOOO(m)$ ring operations, the ring now being the polynomials in $\xi$. Since these polynomials have degree $\OOO(m^2)$, the product of $G_k(t)$ and the rational functions in \eqref{eq:binexp} takes $\tOOO(m^2)$ integer operations, and yields the terms in the sum \eqref{eq:resFG}. After factoring out the common denominator $(a_2 b_2 c_2)^{m_0}$, this sum involves $\OOO(m)$ polynomials of degree $\OOO(m)$ in $t$, which requires $\OOO(m^2)$ additions of polynomials of degree $\OOO(m^2)$ in $\xi$. This involves $\OOO(m^4)$ integer operations and yields a polynomial of degree $\OOO(m)$ in $t$, whose coefficients $P_i(\xi)$ are polynomials of degree $\OOO(m^2)$. The gcd of $g(\xi)$ with the $P_i(\xi)$ can be computed in $\tOOO(m^3)$ operations, resulting in a polynomial $R_1(\xi)$ of degree $\OOO(m)$, since $g(\xi)$ has degree $\OOO(m)$. Step~4 therefore takes $\OOO(m^4)$ operations.

\emph{Step 5}. One determines whether $R_1(\xi)$ has real roots using root isolation, which takes $\tOOO(m)$ operations using Pan's algorithm for root isolation \cite{Pan, Mehlhorn}.

\emph{Step 6}. Writing $\bfx = (x, y, z) = (x_1/x_2, y_1/y_2, z_1/z_2)$, we find that
\[ \|\bfx'\|^2 = \frac{(x_2x_1' - x_1x_2')^2 y_2^4 z_2^4 + (y_2y_1' - y_1y_2')^2 x_2^4 z_2^4 + (z_2z_1' - z_1z_2')^2 x_2^4 y_2^4}{x_2^4 y_2^4 z_2^4} \]
can be computed in $\OOO(m)$ operations. From Step 3 we already know the expansions of the powers $(at + b)^l, (ct + d)^l$ and their products, thus determining $x\circ \varphi$. Taking the derivative of $x\circ \varphi$ and then squaring involves multiplying and adding polynomials of degree $\OOO(m)$ in $t$ and $\OOO(m^2)$ in $\xi$, which requires $\tOOO(m^2)$ operations. Similarly we determine $[(y \circ \varphi)']^2$ and $[(z \circ \varphi)']^2$ in $\tOOO(m^2)$ operations. The resulting rational functions have numerator and denominator of degree $\OOO(m)$ in $t$ and $\OOO(m^2)$ in $\xi$, and can be added in $\tOOO(m^2)$ operations. Clearing denominators again takes $\tOOO(m^2)$ operations and results in the polynomial $W_\xi(t)$ from \eqref{eq:W} of degree $\OOO(m)$ in $t$ and of degree $\OOO(m^2)$ in $\xi$. To compute \eqref{gcd}, we need to compute $\OOO(m)$ times the univariate gcd of polynomials of degree $\OOO(m)$ and degree $\OOO(m^2)$, which requires $\tOOO(m^3)$ operations.
Step~6 therefore requires $\tOOO(m^3)$ operations.

\emph{Steps 7--10}. These steps have the same complexity as Steps 4--6.
\end{proof}

Note that resorting to probabilistic algorithms, the bivariate gcd $G$ in Step~1 can be computed in $\tOOO(m^2)$ operations using the `small primes modular gcd algorithm' and fast polynomial arithmetic \cite[Corollary 11.9.(i)]{VonZurGathen.Gerhard}. Thus a probabilistic version of Algorithm {\tt Symm$^\pm$} uses $\OOO(m^4)$ operations.

\begin{table}
\begin{tabular*}{\columnwidth}{@{ }@{\extracolsep{\stretch{1}}}*{1}{lcrr}@{ }}
\toprule
curve & degree & $t_{\mbox{old}}$ & $t_{\mbox{new}}$ \\
\midrule
twisted cubic &  3 &  0.26 & 0.15 \\
cusp          &  4 &  0.52 & 0.16 \\
half-turn 1   &  4 &  2.22 & 0.14 \\
crunode       &  4 & 39.60 & 0.42 \\
inversion 1   &  7 &  6.80 & 0.19 \\
space rose    &  8 & 57.60 & 0.25 \\
inversion 2   & 11 & 75.60 & 0.22 \\
\bottomrule
\end{tabular*}
\caption{Average CPU time (seconds) of the algorithm in \cite{AHM13-2} ($t_\text{old}$) and Algorithm {\tt Symm$^\pm$} ($t_{\text{new}}$) for parametric curves given in \cite{AHM13-2}.}\label{tab:spaceinvolutions}
\end{table}

\subsection{Experimentation} \label{exp}
\noindent Algorithm {\tt Symm$^\pm$} was implemented in the computer algebra system {\tt Sage} \cite{sage}, using {\tt Singular} \cite{singular} as a back-end, and was tested on a Dell XPS 15 laptop, with 2.4 GHz i5-2430M processor and 6 GB RAM. Additional technical details are provided in the {\tt Sage} worksheet, which can be downloaded from the third author's website \cite{WebsiteGeorg} and can be tried out online by visiting {\tt SageMathCloud}~\cite{SageMathCloud}.

\begin{table}
\begin{tabular*}{\columnwidth}{@{ }@{\extracolsep{\stretch{1}}}*{1}{cccccc}@{ }}
\toprule
& \hspace{-0.5em}\includegraphics[scale = 0.1]{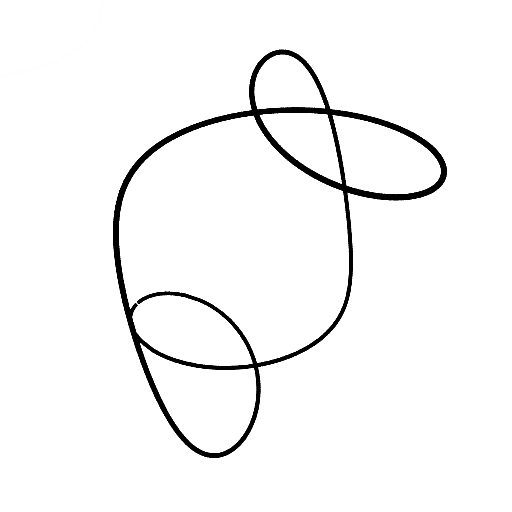}\hspace{-0.5em} &
  \hspace{-0.5em}\includegraphics[scale = 0.1]{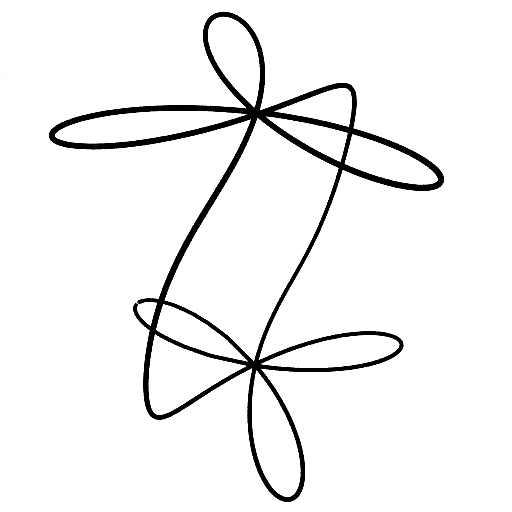}\hspace{-0.5em} &
  \hspace{-0.5em}\includegraphics[scale = 0.1]{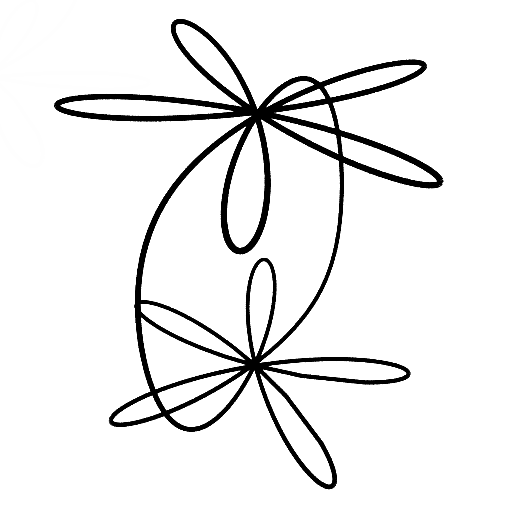}\hspace{-0.5em} &
  \hspace{-0.5em}\includegraphics[scale = 0.1]{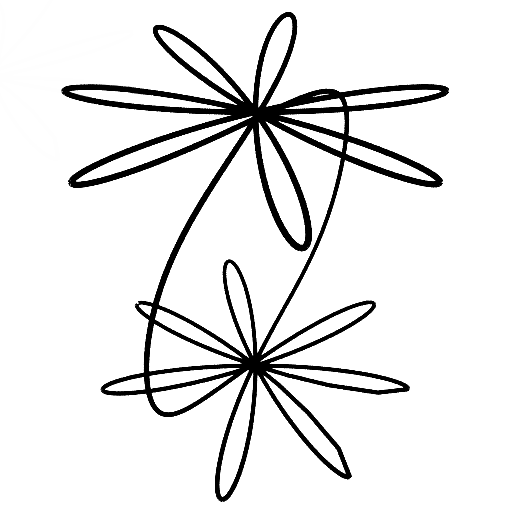}\hspace{-0.5em} &
  \hspace{-0.5em}\includegraphics[scale = 0.1]{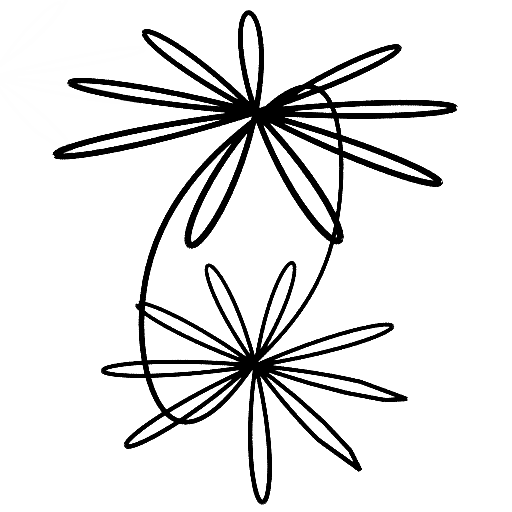}\hspace{-0.5em} \\
degree         &    8 &   12 &   16 &   20 &   24 \\
$t_\text{new}$ & 0.66 & 0.92 & 1.47 & 2.30 & 4.38 \\
\midrule
& \hspace{-0.5em}\includegraphics[scale = 0.1]{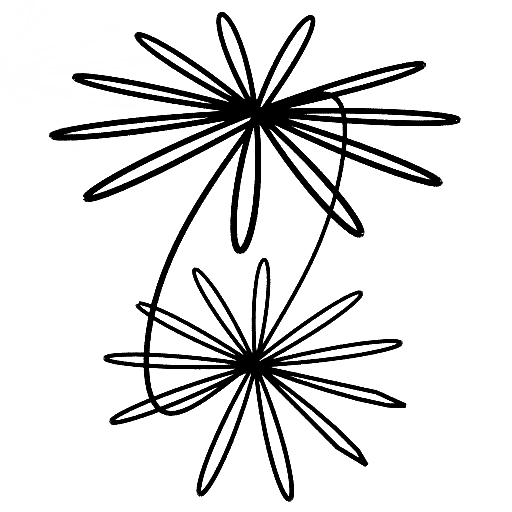}\hspace{-0.5em} &
  \hspace{-0.5em}\includegraphics[scale = 0.1]{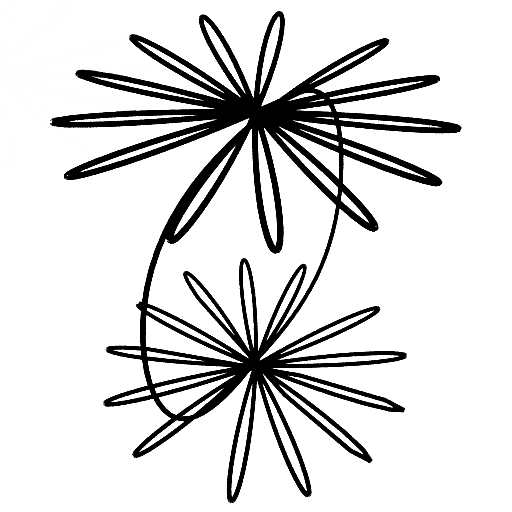}\hspace{-0.5em} &
  \hspace{-0.5em}\includegraphics[scale = 0.1]{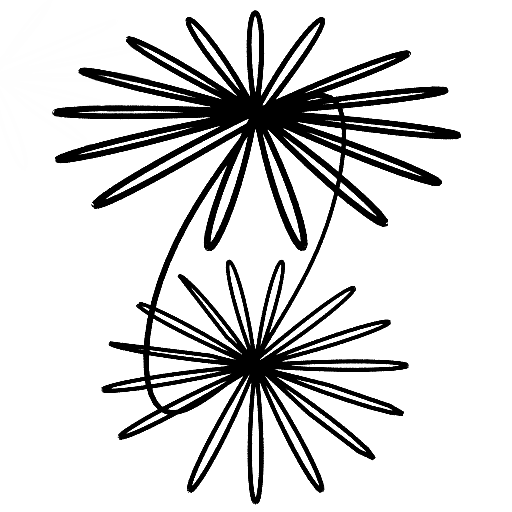}\hspace{-0.5em} &
  \hspace{-0.5em}\includegraphics[scale = 0.1]{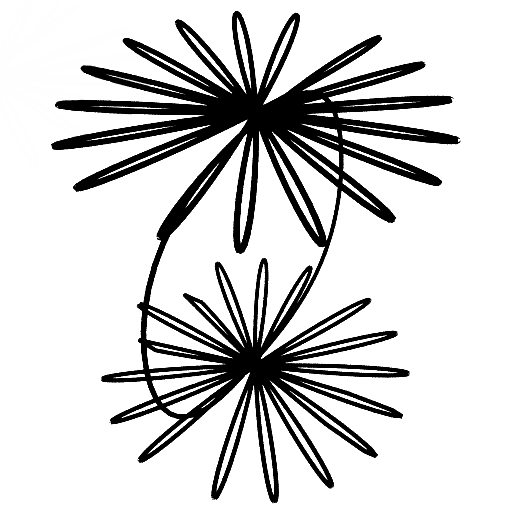}\hspace{-0.5em} &
  \hspace{-0.5em}\includegraphics[scale = 0.1]{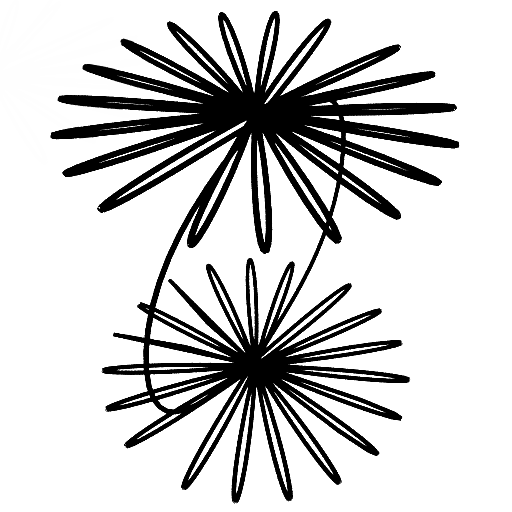}\hspace{-0.5em} \\
degree         &   28 &   32 &   36 &    40 &    44 \\
$t_\text{new}$ & 5.33 & 6.53 & 8.77 & 15.88 & 18.11 \\
\bottomrule
\end{tabular*}
\caption{Average CPU time $t_\text{new}$ (seconds) of Algorithm {\tt Symm$^\pm$} for daisies of various degrees.}\label{tab:spacerose}
\end{table}

We present tables with timings corresponding to different groups of examples. Table \ref{tab:spaceinvolutions} corresponds to the set of examples in \cite{AHM13-2}, making it possible to compare the timing $t_\mathrm{new}$ of Algorithm {\tt Symm$^\pm$} to the timing $t_\mathrm{old}$ of the algorithm in \cite{AHM13-2}. It is clear from the table that the algorithm introduced in this paper is considerably faster for each curve.

To test Algorithm {\tt Symm$^\pm$} for symmetric curves with higher degree, Table~\ref{tab:spacerose} lists the timings for a family of daisies of increasing degree $m = 4j + 4$, parametrically given by \eqref{eq:daisies}. The algorithm quickly finds the symmetries of these symmetric curves, also for high degree.

\begin{table}[t!]
\begin{tabular*}{\columnwidth}{m{5em}@{\extracolsep{\stretch{1}}}*{1}{rrrrrrrrrr}}
\toprule
$t_\text{new}$ &  $\tau = 4$ &  $\tau = 8$ & $\tau = 16$ & $\tau = 32$ & $\tau = 64$ & $\tau = 128$ & $\tau = 256$ \\ \midrule 
$m=4$  &          0.61 &          0.62 &          0.66 &          0.73 &          0.83 &          1.14 &          1.88 \\ 
$m=6$  &          1.65 &          1.76 &          1.72 &          1.89 &          2.13 &          2.80 &          4.55 \\ 
$m=8$  &          3.50 &          3.54 &          3.55 &          3.84 &          4.27 &          5.36 &          8.59 \\ 
$m=10$ &          7.53 &          7.47 &          7.30 &          7.98 &          8.42 &          9.76 &         15.35 \\ 
$m=12$ &         14.46 &         14.35 &         14.30 &         14.84 &         15.98 &         18.35 &         25.87 \\ 
$m=14$ &         22.31 &         23.24 &         22.39 &         22.71 &         24.93 &         27.35 &         38.36 \\ 
$m=16$ &         34.86 &         35.60 &         35.38 &         35.27 &         38.14 &         41.91 &         55.74 \\ 
$m=18$ &         53.03 &         52.78 &         52.78 &         51.16 &         54.49 &         60.44 &         78.27 \\ 
\bottomrule
\end{tabular*}
\caption{CPU times $t_{\text{new}}$ (seconds) for random dense rational parametrizations of various degrees $m$ and coefficients with bitsize bounded by $\tau$.}\label{tab:sizedegree}
\end{table}

\begin{table}[t!]
\begin{tabular*}{\columnwidth}{m{5em}@{\extracolsep{\stretch{1}}}*{1}{rrrrrrrrrr}}
\toprule
$t_\text{new}$ &  $\tau = 4$ &  $\tau = 8$ & $\tau = 16$ & $\tau = 32$ & $\tau = 64$ & $\tau = 128$ & $\tau = 256$ \\ \midrule 
$m=4$  &      0.85 &      0.89 &      0.91 &      0.97 &      1.12 &      1.51 &      2.39 \\ 
$m=6$  &      1.89 &      2.02 &      1.99 &      2.21 &      2.57 &      3.36 &      5.40 \\ 
$m=8$  &      4.08 &      4.24 &      4.52 &      5.16 &      5.45 &      7.42 &     10.41 \\ 
$m=10$ &      8.29 &      8.80 &      8.88 &      9.30 &     10.74 &     12.64 &     19.87 \\ 
$m=12$ &     17.47 &     18.20 &     17.96 &     17.12 &     18.49 &     25.19 &     34.11 \\ 
$m=14$ &     28.54 &     28.72 &     29.55 &     28.54 &     31.87 &     34.71 &     44.19 \\ 
$m=16$ &     41.20 &     41.53 &     42.02 &     43.07 &     45.55 &     51.36 &     65.58 \\ 
$m=18$ &     58.42 &     58.91 &     59.54 &     61.08 &     64.31 &     71.89 &     94.33 \\ 
\bottomrule
\end{tabular*}
\caption{CPU times $t_{\text{new}}$ (seconds) for random dense rational parametrizations with a central inversion of various degrees $m$ and coefficients with bitsize bounded by $\tau$.}\label{tab:sizedegreeinv}
\end{table}

\begin{figure}[t!]
\begin{center}
\includegraphics[scale=0.7]{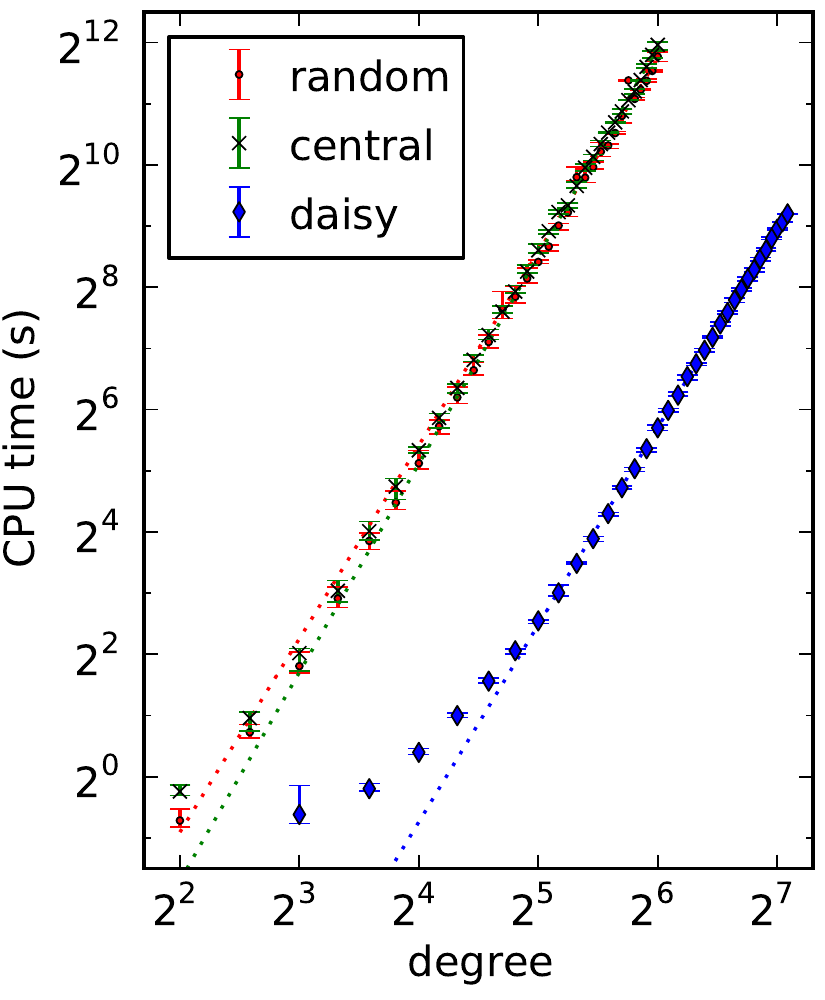}\quad
\includegraphics[scale=0.7]{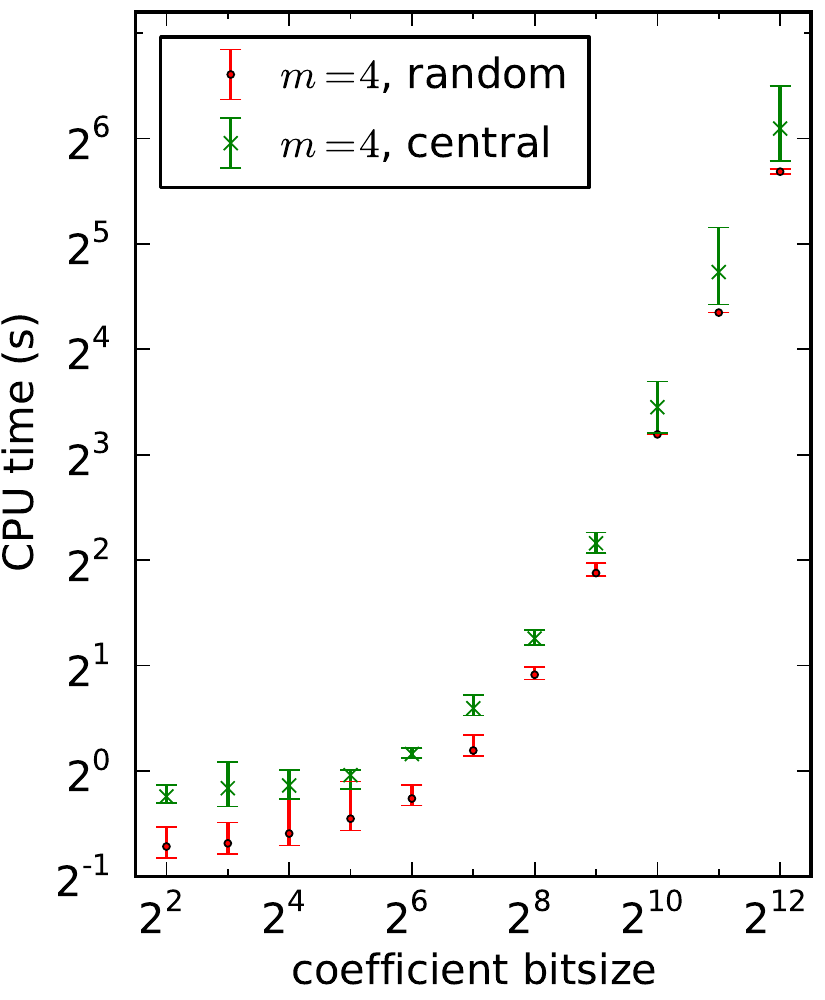}
\end{center}
\caption{Left: The average CPU time $t_{\text{new}}$ (seconds) versus the degree $m$ for daisies and random dense rational parametrizations with bitsize $\tau = 4$ and with only the trivial symmetry (random) and with an additional central inversion (central), fitted by the power laws \eqref{eq:powerlaw1}--\eqref{eq:powerlaw2} (dotted). Right: The average CPU time $t_{\text{new}}$ (seconds) versus the coefficient bitsizes for degree $m=4$. The symbols indicate the average CPU times, and the error bars show the interval of CPU times.}\label{fig:timings}
\end{figure}

Table \ref{tab:sizedegree} lists average timings for random dense rational parametrizations with various degrees $m$ and coefficients with bitsizes at most $\tau$. To study the effect of an additional nontrivial symmetry, we consider random parametrizations $\bfx = (x_1, x_2, x_3)$ with antisymmetric numerators and symmetric denominators of the same degree $m$ and with bitsize at most $\tau$, i.e., of the form
\[
x_i(t) = \frac
{c_{i,0} + c_{i, 1} t + \cdots - c_{i, 1} t^{m-1} - c_{i, 0} t^m}
{d_{i,0} + d_{i, 1} t + \cdots + d_{i, 1} t^{m-1} + d_{i, 0} t^m},
\qquad i = 1,2,3,
\]
with $\lceil \log_2 |c_{i,j}| \rceil, \lceil \log_2 |d_{i,j}| \rceil \leq \tau - 1$. Since $\bfx(1/t) = - \bfx(t)$, such parametric curves have a central inversion about $\bfx(1) = 0$. Table \ref{tab:sizedegreeinv} lists average timings for these curves with various degrees $m$ and bitsizes at most $\tau$.

For very large coefficient bitsizes ($> 256$, i.e., coefficients with more than 77 digits) and high degrees ($> 20$) the machine runs out of memory. We have therefore analyzed separately the regime with high degree and the regime with large coefficient bitsize. 

Figure~\ref{fig:timings} presents log-log plots of the CPU times against the degree (left) and against the coefficient bitsizes (right). The (eventually) linear nature of these data suggests the existence of an underlying power law. Least squares approximation yields that, as a function of the degree $m$, the average CPU time $t_\text{new}$ satisfies
\begin{equation}\label{eq:powerlaw1}
t_\text{new} \sim \alpha m^\beta, \qquad \alpha \approx 6.7\cdot 10^{-3},\qquad \beta\approx 3.2
\end{equation}
in case of random dense rational parametrizations with coefficient bitsize at most $\tau = 4$,
\begin{equation}\label{eq:powerlaw1b}
t_\text{new} \sim \alpha m^\beta, \qquad \alpha \approx 4.7\cdot 10^{-3},\qquad \beta\approx 3.3,
\end{equation}
in case of random dense rational parametrizations with a central inversion and with coefficient bitsize at most $\tau = 4$, and
\begin{equation}\label{eq:powerlaw2}
t_\text{new} \sim \alpha m^\beta, \qquad \alpha \approx 7.9\cdot 10^{-5},\qquad \beta\approx 3.2
\end{equation}
for the daisies. Note that these timings are close to the $\tOOO(m^3)$ operations needed by Brown's modular gcd algorithm \cite{Brown}, which is used in the implementation in {\tt Sage} for bivariate gcd computations. The reason is that in the analyzed examples almost all time is spent computing the bivariate gcd in Step 1, which then typically has low degree, so that the remaining calculations take relatively little time.

\subsection{An observation on plane curves} \label{obs-planar}
\noindent If $\CCC$ is planar, then $\tau$ and $T^\pm$ are identically zero, so that $G^\pm = K$. Although Algorithm {\tt Symm$^\pm$} is still valid for such curves, we have observed a very poor performance in this case. The reason is that, for non-planar curves, the degree of $G^\pm$ is typically small compared to the degrees of $K$ and $T^\pm$. However, for plane curves the degree of $G^\pm$ is equal to the degree of $K$, and then the computation takes a very long time. Therefore, for plane curves, the algorithms in \cite{AHM13} and \cite{AHM13-2} are preferable.

\subsection{Comparison to previous method}
\noindent Table \ref{tab:spaceinvolutions} indicates a dramatic improvement of the CPU time of {\tt Symm$^\pm$} over the method described in \cite{AHM13-2}. In that paper the symmetry $f(\bfx) = Q\bfx + \bfb$ and M\"obius transformation $\varphi(t) = (at + b)/(ct + d)$ are first expressed polynomially in terms of some (yet unknown) algebraic number $\beta$. By far the most CPU time is spent after that, on substituting $a(\beta), b(\beta), c(\beta), Q(\beta)$ and $\mathbf{b}(\beta)$ into the relation
\begin{equation*}\label{eq:fundamentalrelation}
f\big(\bfx(t)\big) - \bfx\big(\varphi(t)\big) \equiv 0.
\end{equation*}
Since the degrees can get very high in this relation, this substitution can take a long time. Then the algebraic numbers $\beta$, and therefore the symmetry and M\"obius transformation, are found by requiring that this relation holds identically.

Furthermore, the method described in \cite{AHM13-2} requires that the parametrization $\bfx$ satisfies rather strict conditions. Quite often, a reparametrization is needed in order to achieve these conditions, which can result in destroying sparseness and increasing the coefficient size. This, in turn, has an impact on the time taken by the substitution step.

By contrast, in Algorithm {\tt Symm$^\pm$} we use additional information provided by the curvature and torsion of the curve to compute $G^\pm_{\bfx}=\gcd(K_{\bfx}, T^\pm_{\bfx})$, whose degree is generally low. The M\"obius transformations are then computed in just one step as factors of $G^\pm_{\bfx}$. As a consequence, no	 substitution step is needed. Moreover, unless $\bfx$ is not proper, no reparametrization is required.

\section{Conclusion} \label{sec-conclusions}
\noindent We have presented a new, deterministic, and efficient method for detecting whether a rational space curve is symmetric. The method combines ideas in \cite{A13, AHM13-2} with the use of the curvature and torsion as differential invariants of space curves. The complexity analysis and experiments show a good theoretical and practical performance, clearly beating the performance obtained in \cite{AHM13-2}. The algorithm also improves in scope on the algorithm of \cite{AHM13-2}, which can only be applied to find the symmetries for Pythagorean-hodograph curves and involutions of other curves. Finally Algorithm {\tt Symm$^\pm$} is simpler than the algorithm in \cite{AHM13-2}, which imposes certain conditions on the parametrization that often lead to a reparametrized, non-sparse curve whose coefficients have a large bitsize. By contrast, the algorithm in this paper has fewer requirements and is efficient even with high degrees.

Note that Algorithm {\tt Symm$^\pm$} is based on two conditions in Theorem \ref{funda-sym}, one involving the curvature and torsion of the curve and the other one involving the arc length. One might wonder whether these two conditions really are independent for the case of rational curves. We included both conditions because we did not succeed in proving that they are dependent, but neither did we find an example of a tentative M\"obius transformation not satisfying Condition \eqref{new-cond}. The relation between the conditions is therefore undetermined, and we pose the question here as an open problem. In any case, in the complexity analysis and experiments we observed that the cost of checking \eqref{new-cond} is small compared to the rest of the algorithm.

The implementation in {\tt Sage} can be improved in several ways. First, several of the methods named in the complexity section are not included in {\tt Sage}, which carries out the corresponding tasks by using other algorithms. Furthermore, almost all space curves are asymmetric, and these cases can be identified faster. In order to do this, one can first remove all factors $s-t$ from $K_\bfx$ and $T^\pm_{\bfx}$, and then check whether the remaining polynomials are coprime using modular arithmetic. In the affirmative case, the conclusion that the curve has no nontrivial symmetries could be obtained at very little computational cost.

As a final remark, as this paper sets forth a method for computing exact symmetries of parametric curves with rational coefficients, one could ask whether a similar development could yield a method for computing approximate symmetries of parametric curves with floating point coefficients. This is an open question that we would like to address in the future.

\section*{Acknowledgments}
\noindent We are very grateful for the detailed reports of the reviewers, which helped to improve the paper significantly, in particular Section \ref{exp-sec}. We also thank Rob Corless for suggesting the term ``daisies'' for the curves in Table 2.

\bigskip

\end{document}